\documentclass[final,1p,times]{elsarticle}
\usepackage{lineno,hyperref}
\usepackage{amsmath,amsfonts}
\usepackage{mathrsfs}
\usepackage{verbatim}
\usepackage{latexsym}
\usepackage{graphicx}
\usepackage{epstopdf}
\usepackage{subfig}
\usepackage{caption}
\usepackage{dsfont}
\usepackage{indentfirst}
\usepackage{xcolor}
\usepackage[notref,notcite]{showkeys}
\usepackage{algorithm}
\usepackage{algpseudocode}
\usepackage{booktabs}
\usepackage{natbib}
\usepackage{color}
\usepackage{amsthm}  
\allowdisplaybreaks[4]  
\newtheorem{theorem}{Theorem}[section]
\newtheorem{lemma}[theorem]{Lemma}

\newtheorem{assumption}[theorem]{Assumption}
\newtheorem{example}[theorem]{Example}

\newtheorem{remark}[theorem]{Remark}

\newcommand{\lf}{\lfloor}
\newcommand{\rf}{\rfloor}
\def\lan{\langle} \def\ran{\rangle}
\def\d{\delta}
\newcommand{\D}{\Delta}

\def\eps{\varepsilon}
\def\e{\epsilon}
\def\th{\theta}
\def\ve{\vee}
\def\we{\wedge}
\def\a{\alpha} \def\g{\gamma}

\def\f{\forall}

\newcommand{\Ito}{It\^{o} }
\newcommand{\Holder}{H\"{o}lder }
\newcommand{\BDG}{Burkholder-Davis-Gundy }

\newcommand{\E}{\mathbb{E}}

\newcommand{\PP}{\mathbb{P}}
\newcommand{\II}{\mathbb{I}} %

\newcommand{\R}{\mathbb{R}}

\newcommand{\tkk}{(k+1)\Delta}
\newcommand{\tk}{k\Delta}
\newcommand{\qu}{\quad}
\newcommand{\no}{\nonumber }
\newcommand{\intt}{\int_{-\tau}^{0}}
\newcommand{\pd}{\pi_{\Delta}}

\makeatletter
\@addtoreset{equation}{section}
\makeatother

\modulolinenumbers[5]

\begin{document}

\begin{frontmatter}
\title{
%
Segment convergence
for super-linear stochastic functional differential equations by the truncated Euler-Maruyama method
}


\author[mainaddress]{Shounian Deng}
\author[mainaddress]{Weiyin Fei}
\author[address2]{Banban Shi\corref{correspondingauthor}}
\cortext[correspondingauthor]{Corresponding author}
\ead{shibanban@zuel.edu.cn}

\address[mainaddress]{Key Laboratory of Advanced Perception and Intelligent Control of High-end Equipment, Ministry of Education,  and School of Mathematics-Physics and Finance, Anhui Polytechnic University,  Wuhu  241000, China}
\address[address2]{School of Statistics and Mathematics, Zhongnan University of Economics and Law, Wuhan, 430073, China}
\begin{abstract}
Most existing literature focuses on pointwise convergence (i.e.,  convergence at a fixed time point) of numerical solutions for Stochastic functional differential equations (SFDEs). 
In contrast, this paper investigates the strong segment convergence (i.e., the strong order of convergence of the numerical segment process).
For SFDEs with super-linear drift and diffusion coefficients, we employ the explicit truncated Euler-Maruyama (EM) scheme. First, we establish the uniform moment boundedness of the truncated EM solution over a finite time interval.
Second, we derive the 
$L^2$-error estimate between the continuous numerical segment and the step numerical segment. Finally, we prove the strong convergence order of the numerical segment generated by the truncated EM. The results can be used to analyze invariant measures and ergodicity of numerical segment, and have important applications in practical problems such as path-dependent financial options. We also provide a numerical example to support the theoretical results.

\end{abstract}

\begin{keyword}
Stochastic Functional Differential Equations, Segment process,
Truncated  EM method, Strong convergence.
\end{keyword}

\end{frontmatter}
\nolinenumbers
\section{Introduction}
Stochastic functional differential equations (SFDEs) are used to model dynamic systems in science and industry, including physical sciences,  population dynamics, and control engineering; see  \cite{WFK2010,LuoMao2011Auto,BJH2014na,Wu2023jde,SBB2022siam}.
Since explicit solutions are generally unavailable for  SFDEs, numerical approaches are required. 
This paper is concerned with numerical methods for the  following  SFDE
\begin{equation}\label{Eq0}
\begin{cases}
dX(t)  = f(X_t)dt + g(X_t)dB(t), &  t >0,  \\[1mm]
X(t)  = \xi (t), &  t \in [-\tau, 0],
\end{cases}
\end{equation}
where $\tau >0$ is a constant, $f: C([-\tau, 0 ]; \R^{n_1}) \to \R^{n_1}$ and $g: C([-\tau, 0 ]; \R^{n_1})\to \R^{n_1 \times n_2}$ are local Lipschitz continuous, $B(\cdot)$ is an $n_2$-dimensional Brownian motion
on a filtered probability space
 $(\Omega , {\mathcal F}, \{{\cal F}_t\}_{t\ge 0}, \mathbb{P})$
satisfying the usual conditions (i.e., it is increasing and right continuous while $\cal{F}_\textrm{0} $ contains all $\mathbb{P}$\textrm{-}null sets), the initial data $\xi (\cdot)\in
C([-\tau, 0 ]; \R^{n_1}) $, and segment process $ X_t = \{ X_t (\th): \th \in [-\tau, 0] \}$ is a $C([-\tau, 0 ]; \R^{n_1}) $-valued stochastic process for $t \ge0$, in which  $X_t (\th) = X(t+ \th)$ with  $\| X_t \|: = \sup_{-\tau \le \th \le 0} | X(t+ \th)|$. 
\par 

With respect to numerical solutions for SFDE \eqref{Eq0}, so far most of the literature has addressed pointwise convergence \cite{Mao2003LMS,Zhao2020ANM,Li_Mao2024spa}, for example, the mean-square convergence at a fixed point
\begin{align}\label{eq2026421_2}
\lim_{ \D \to 0} \E |X(T) - Y^{\D} (T)|^2 =0, 
\end{align}
where $\D$ denotes the step size,  $X(T)$ and $Y^{\D}(T)$ denote the exact and the corresponding numerical solutions, respectively.
On the contrary, this paper focuses on the strong order of the numerical  segment process $Y^{\D}_T$, i.e., 
\begin{align}\label{eq2026421}
 \E\|X_T -  Y^{\D}_T \|^{2} 
\le C\D^{\hat \g },
\end{align}
where $C$ and $\hat \g$ are positive constants, $X_t$ and $Y^{\D}_t$ denote the exact and the corresponding numerical segments, respectively.
\par
It is clear that the numerical segment convergence \eqref{eq2026421} has its own importance. Such convergence may help to derive the ergodicity of numerical solutions, similar to that of the underlying equations. For example, suppose that the SFDE admits a unique invariant probability measure. Then, with the help of numerical segment convergence \eqref{eq2026421}, we can conclude that the discrete-time semigroup associated with the numerical segment also admits a unique numerical invariant probability measure under uniform moment boundedness, attractivity, and Markov property of the numerical segment \cite{Shi2024jde}. Moreover, in some financial situations, we need to approximate path-dependent quantities, such as the European barrier option value \cite{Higham2005}. In these situations, we require the strong convergence of a numerical segment.

For SFDEs with linearly growing coefficients, several numerical methods have been well established, including the Euler-Maruyama (EM) scheme \cite{Mao2003LMS,Zhou2025camc}, the theta-EM method \cite{2004The}, and the split-step backward EM method \cite{Jiang2011cns}. While implicit approaches such as the backward EM \cite{Zhou2017CAM,Shi2024jde} and split-step theta methods \cite{Zhao2020ANM} remain effective for SFDEs with super-linear drift and linear diffusion,  their computational efficiency is constrained by the requirement to solve a nonlinear equation at each time step.

Recently, Li, Mao and Song \cite{Li_Mao2024spa} introduced an explicit truncated EM scheme for SFDEs with super-linear  coefficients, 
removing the conventional linear growth condition on the diffusion term required in 
earlier works \cite{Zhou2017CAM,Zhang_song2018CAM}.
They established a strong convergence rate of order $1/2$ for the pointwise  process $X(t)$ in  SFDE \eqref{Eq0}. 
The truncated EM can also be applied to approximate the solution of SFDEs with infinite time delay \cite{Zhou_Mao2026}.

Based on the above discussions, the objective of this work is to study the strong order of the numerical segment  for SFDEs with super-linear drift and diffusion coefficients.
In order to
handle the super-linear growth of the coefficients,
we employ the truncated methods from \cite{Mao2015truncated,Li_Mao2024spa}. 
We first establish 
the uniform moment boundedness of the truncated EM solution $Y(t)$ over a finite time interval $[0,T]$.
Then, we provide the $L^2$-errors between the continuous and the step numerical segments. Finally, we prove the strong convergence order of the  numerical segment generated by the truncated EM.

The rest of the paper is organized as follows. 
Section 2 introduces some notations and preliminary results on the exact solutions of SFDE \eqref{Eq0}. Section 3  proposes an truncated EM scheme for SFDE \eqref{Eq0}. In Section 4, the  strong convergence order of segment sequence 
generated by the truncated EM
 is established on a finite time interval.  In Section 5, a numerical example is presented to illustrate the theoretical results.
\section{Preliminary results}

Throughout this paper, the following notations are used.  Let $| \cdot |$ denote both the
Euclidean norm in $\R^{n_1}$ and the trace norm in $\R^{ n_1 \times n_2}$.
For two real numbers $a$ and $b$,  $a \ve b:  = \max (a,b)$ and $a \we b : = \min (a,b)$.
For a set $G$, its indicator function is denoted by $\II_G$.
Denote by $\mathcal{C_{\tau}}:= C([-\tau, 0 ]; \R^{n_1}) $ the space of all continuous function $\psi (\cdot) $ from $[-\tau, 0]$ to $\R^{n_1}$ equipped with the norm $\| \psi \| = \sup_{-\tau \le \th \le 0} | \psi (\th)|$.
Let $\mathcal{B}(\mathcal{C_\tau})$ be the Borel $\sigma$-algebra in $\mathcal{C_\tau}$ and $\mathcal{B}_{b}(\mathcal{C_{\tau}})$ be the set of all bounded measurable functions on $\mathcal{C_{\tau}}$. 
$\mathbf 0$ denotes the zero element in $\mathcal{C_{\tau}}$, namely, $ \mathbf 0 (\th) =0$, for any $\th \in [-\tau, 0]$.
For any $x, y\in \mathbb{R}^{n_1}$, their inner product is denoted by $\lan x,y\ran$.
Let $\R_+ = [0, \infty)$. We denote by $\mathbb N_+$ the set of strictly positive integers.
\par
We now introduce  the following assumptions.

\begin{assumption}\label{AS2_Kha}
There exist positive constants $q > 3$, $\hat r $, $\a_0$, $\a_1 >\a_2  $ such that
\begin{align*}
& 2 \lan \psi (0)- \bar \psi (0), f(\psi) - f(\bar \psi) \ran + (q -1) |g(\psi) - g(\bar \psi)|^2 \le \a_0 \Big ( | \psi (0) - \bar \psi (0)|^2+ 
\frac{1}{\tau} \int_{-\tau}^{0} | \psi (\th) - \bar \psi (\th)|^2 
  d \th
\Big ) \\
& - \a_1  | \psi (0) - \bar \psi (0)|^2 
 (|\psi (0)|^{\hat r}+
  |\bar \psi (0)|^{\hat r})
  +  \frac{\a_2}{\tau} \int_{-\tau}^{0} | \psi (\th) - \bar \psi (\th)|^2 
 (|\psi (\th)|^{\hat r} + |\bar \psi (\th)|^{\hat r})
  d \th,
\end{align*}
for any $\psi, \bar \psi \in \mathcal{C_{\tau}}$.
\end{assumption}
\begin{assumption}\label{AS5_POLYNO}
There exist positive constants $c_1$, $r$ such that
\begin{align*}
 & |f(\psi) - f(\bar \psi)|  
  \le c_1 \Big (| \psi (0) - \bar \psi (0)|(1+  | \psi (0)|^{r} +  | \bar \psi (0)|^{r} )
 + \frac{1}{\tau} \int_{-\tau}^{0} | \psi (\th) - \bar \psi (\th)| 
(1+  | \psi (\th)|^{r} +  | \bar \psi (\th)|^{r} ) d \th \Big ), \\
& |g(\psi) - g(\bar \psi)|^2  
 \le c_1 \Big (| \psi (0) - \bar \psi (0)|^2(1+  | \psi (0)|^{r} +  | \bar \psi (0)|^{r} ) 
 + \frac{1}{\tau} \int_{-\tau}^{0} | \psi (\th) - \bar \psi (\th)|^2 
(1+  | \psi (\th)|^{r} +  | \bar \psi (\th)|^{r} ) d \th \Big ),
\end{align*}
for any $\psi, \bar \psi \in \mathcal{C_{\tau}}$.
\end{assumption}
\begin{assumption}\label{AS3}
There exist positive constant $c_2$  such that the initial data $\xi$ satisfies
\begin{align*}
|\xi (t_1) - \xi (t_2)|^2 \le c_2 |t_1 - t_2|, \quad \f t_1, t_2 \in [-\tau, 0].
\end{align*}
\end{assumption}
\begin{remark}\label{rem407}
In many models used in mathematical finance, Assumptions  \ref{AS2_Kha} and \ref{AS5_POLYNO} are met. Typical example is the functional version of  the stochastic volatility model \cite{FSSCA2017,HS2015ams}
\begin{equation}\label{example407}
\begin{cases}
dX(t) = \left(a_{0} + a_{1} X(t) - a_{2} X^3(t)  \right) dt 
+  \int_{-1}^{0}X^2(t+\theta)d \theta d B(t), &  t >0,  \\[1mm]
X(t)  = 0.05, &  t \in [-1, 0],
\end{cases}
\end{equation}
%
 where $a_0$, $a_1$ and $a_2$ are positive constants. See the Example \ref{example4072} for  verification of the assumptions.
\end{remark}

\textbf{In what follows, we take the constant $p=q - \hat \epsilon$,  where $q$  is from Assumption \ref{AS2_Kha} and $\hat \epsilon$ is a sufficiently small positive constant.} Moreover, 
$C$ denotes a generic positive constant, whose value may change from line to line. Similarly, denote by $C(\alpha)$, $C(\alpha, \beta)$ the generic positive constants depending on parameter $\alpha$ and $(\alpha, \beta)$, respectively.\par
By the elementary $(a + b)^2 \le (1 + \hat \e )a^2 + (1 + 1/\hat \e)b^2 $, for $a, b \in \R$, $\hat \e >0$, we have 
\begin{align*}
 & 2 \lan \psi (0), f(\psi ) \ran + (p-1) |g(\psi)|^2  \\
& = 2 \lan \psi (0) -\mathbf 0  (0) , f(\psi ) -f(\mathbf 0 ) \ran + 
2 \lan \psi (0) -\mathbf 0  (0) , f(\mathbf 0 ) \ran + (p-1) |g(\psi) - g(\mathbf 0 ) + g(\mathbf 0 )|^2 \\
& \le  2 \lan \psi (0) -\mathbf 0  (0) , f(\psi ) -f(\mathbf 0 ) \ran + 
(p-1)(1 + \hat \e) |g(\psi) - g(\mathbf 0 )|^2  \\
& \qu + 2 \lan \psi (0)  , f(\mathbf 0 ) \ran + (p-1)(1 + 1/ \hat \e) | g(\mathbf 0 )|^2 .
\end{align*}
Letting $(p-1)(1 + \hat \e) = q -1$, we conclude from Assumption \ref{AS2_Kha} that 
\begin{align}\label{AS4}
 & 2 \lan \psi (0), f(\psi ) \ran + (p-1) |g(\psi)|^2  \\
 & \le c_3 \Big ( 1 + |\psi (0)|^2 + \frac{1}{\tau} \int_{-\tau}^{0} | \psi (\th)|^2 d \th  \Big )
 - \a_1 | \psi(0)|^{2+  \hat r} +
   \frac{\a_2}{\tau} \int_{-\tau}^{0} | \psi (\th)|^{2+  \hat r}   d \th , \qu \f \psi \in \mathcal{C_{\tau}} , \no
\end{align}
where $c_3$ is a positive constant. 
From Assumption \ref{AS5_POLYNO}, it follows that for any $\psi \in \mathcal{C_{\tau}}$,
\begin{align}
& |f(\psi)| \le C \Big ( 1 + |\psi (0)|^{1+ r} + \frac{1}{\tau}\intt | \psi(\th) |^{1 +r} d \th \Big ), 
\label{Eq_B11}  \\
& |g(\psi)|^2 \le C \Big ( 1 + |\psi (0)|^{2+ r} + \frac{1}{\tau}\intt | \psi(\th) |^{2 +r} d \th \Big ). \label{Eq_B1} 
\end{align}
It is noted that Assumption \ref{AS5_POLYNO} implies the local Lipschitz condition.
According to Theorem [2.1] of \cite{Li_Mao2024spa}, under Assumptions \ref{AS2_Kha} and \ref{AS5_POLYNO}, the SFDE \eqref{Eq0} has a unique solution $X(t)$.
For any $R > \| \xi \|$, 
define
\begin{align}\label{Eq9434}
\tau_R:= \inf \{ t \ge 0: |X(t)| \ge R \}.
\end{align}
In a similar argument as in the derivation of   \cite[p. 1231]{Wu2017jde}, we can show that
$\tau_R= \inf \{ t \ge 0: \|X_t \| \ge R \}$.
\begin{lemma}\label{lem2.11}
Let  Assumptions \ref{AS2_Kha} and \ref{AS5_POLYNO}  hold. 
Then SFDE \eqref{Eq0} with the initial data $\xi \in \mathcal{C_{\tau}}$ has a unique global solution $X(\cdot)$ on $[-\tau, \infty )$ satisfying \begin{align}\label{Bound_Xp}
\sup_{ -\tau \le t < \infty} \E |X(t)|^p \le C(p) (1 + \|\xi \|^p) .
\end{align}
For any $T >0$,  it holds that
\begin{align}\label{Eq95}
\PP \{ \tau_R \le T \} \le \frac{C(p,T)}{R^p},
\end{align}
where $ \tau_R$ is defined by \eqref{Eq9434},   $C_T$ depends on $T$ but is independent of $R$. Moreover, 
\begin{align}\label{Bound_XX3}
 \E \left [ \sup_{ 0 \le t \le T } |X(t)|^{\hat p}  \right ]\le C(\hat p, T) 
%
  (1 + \|\xi \|^{\hat p + ( r \ve \hat r) }),
 \qu \textrm{with} \qu  2 \le  \hat p  \le  p -r.
\end{align}
\end{lemma}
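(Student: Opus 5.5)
Existence and uniqueness of the global solution are provided by Theorem 2.1 of \cite{Li_Mao2024spa}, so the work lies in the three moment estimates. For \eqref{Bound_Xp} I will apply the \Ito formula to $e^{\lambda t}|X(t\wedge\tau_R)|^p$, with $\lambda>0$ chosen later. Writing $\mathcal L$ for the generator, the standard computation together with $|\langle x, g\rangle|^2\le |x|^2|g|^2$ gives
\[
\mathcal L |x|^p \le \tfrac p2 |\psi(0)|^{p-2}\big(2\langle \psi(0),f(\psi)\rangle+(p-1)|g(\psi)|^2\big),
\]
and \eqref{AS4} bounds the bracket. The delay terms I handle with Young's inequality, e.g.
\[
|\psi(0)|^{p-2}|\psi(\th)|^{2+\hat r}\le \tfrac{p-2}{p+\hat r}|\psi(0)|^{p+\hat r}+\tfrac{2+\hat r}{p+\hat r}|\psi(\th)|^{p+\hat r},
\]
and similarly for the $c_3$-terms. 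After multiplying by $e^{\lambda t}$ and integrating, I use the shift estimate $\int_0^t e^{\lambda s}\frac1\tau\int_{-\tau}^0|X(s+\th)|^{m}d\th ds\le e^{\lambda\tau}\int_{-\tau}^t e^{\lambda s}|X(s)|^{m}ds$. With this, the negative term $-\a_1|X|^{p+\hat r}$ should dominate the positive delayed contributions, because $\a_1>\a_2$ and $e^{\lambda\tau}$ can be made close to $1$ by taking $\lambda$ small. The remaining lower-order terms of the form $C|X|^p$ are absorbed by $-c|X|^{p+\hat r}+C$ plus $\lambda e^{\lambda t}|X|^p$. Taking expectations (the stochastic integral vanishes because of the stopping), letting $R\to\infty$ by Fatou, and dividing by $e^{\lambda t}$ then gives a bound uniform in $t$.

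\textbf{Main obstacle.} The delicate part is this balancing of constants. The Young weights depend on $p$, so one must check that $\a_2\cdot\frac{2+\hat r}{p+\hat r}+\a_1\cdot\frac{p-2}{p+\hat r}$-type combinations stay below $\a_1$. Since these weights sum to $1$ and $\a_2<\a_1$, the total coefficient is strictly less than $\a_1$ once $e^{\lambda\tau}$ is near $1$. This is the step I expect to require the most care.

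For \eqref{Eq95} I will repeat the same computation on $[0,T]$ without the uniform weight, obtaining $\E|X(T\wedge\tau_R)|^p\le C(p,T)(1+\|\xi\|^p)$. On the event $\{\tau_R\le T\}$ we have $|X(\tau_R)|=R$ by continuity, so $R^p\PP\{\tau_R\le T\}\le C(p,T)$.

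For \eqref{Bound_XX3} I will apply the \Ito formula to $|X(t)|^{\hat p}$, take the supremum over $[0,T]$, and use the Burkholder-Davis-Gundy inequality on the martingale term, with the stopping time $\tau_R$ used first for integrability. The drift part is controlled through \eqref{AS4} (or directly through \eqref{Eq_B11}), giving terms like $\E\int_0^T|X|^{\hat p-1}|f(X_s)|ds\lesssim \E\int_{-\tau}^T(1+|X(s)|^{\hat p+r})ds$. The martingale term is bounded by
\[
\E\Big(\int_0^T|X|^{2\hat p-2}|g(X_s)|^2ds\Big)^{1/2}\le \tfrac12\E\sup|X|^{\hat p}+C\,\E\int_0^T|X|^{\hat p-2}|g|^2ds,
\]
and \eqref{Eq_B1} bounds the last integrand by $C(1+|X|^{\hat p+r})$ plus delayed versions. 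Because $\hat p+r\le p$, \eqref{Bound_Xp} controls these terms. The initial segment contributes $\|\xi\|^{\hat p+r}$, and using $\hat r$ in place of $r$ if one bounds via \eqref{AS4} explains the exponent $\hat p+(r\vee\hat r)$, after bounding $1+\|\xi\|^{p}$ appropriately in the relevant range. Finally I let $R\to\infty$ by monotone convergence.
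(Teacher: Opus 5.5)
Your handling of \eqref{Bound_XX3} is essentially the paper's: It\^o for $|X|^{\hat p}$, Burkholder--Davis--Gundy with absorption of $\frac12\E\sup|X|^{\hat p}$, then \eqref{Eq_B1} together with $\hat p+r\le p$. Bounding the drift crudely via \eqref{Eq_B11}, instead of the paper's \eqref{AS4}-plus-$(\alpha_1-\alpha_2)$ cancellation, also works. Your localization by $\tau_R$ justifies the absorption step, which the paper performs without comment. The real difference is that the paper does not prove \eqref{Bound_Xp} and \eqref{Eq95}; it imports them, with the stated dependence on $\xi$, from Theorem 2.1 of \cite{Li_Mao2024spa}. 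Your self-contained derivation does not deliver that dependence.

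In your weighted It\^o argument, the shift estimate turns the delayed super-linear term into $\frac p2\alpha_2\frac{2+\hat r}{p+\hat r}e^{\lambda\tau}\int_{-\tau}^0 e^{\lambda u}|\xi(u)|^{p+\hat r}\,du$. This is a deterministic constant of size $\|\xi\|^{p+\hat r}$, and no negative term can absorb it. So what you actually prove is $\sup_t\E|X(t)|^p\le C(p)(1+\|\xi\|^{p+\hat r})$, not $C(p)(1+\|\xi\|^p)$.

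\begin{itemize}
\item The bound $\E|X(T\wedge\tau_R)|^p\le C(p,T)(1+\|\xi\|^p)$ asserted in your \eqref{Eq95} paragraph has the same defect. This is harmless there, since \eqref{Eq95} only needs a constant independent of $R$.
\item It does propagate to \eqref{Bound_XX3}. The exponent $\hat p+(r\vee\hat r)$ rests on $\E|X(s)|^{\hat p+r}\le(\E|X(s)|^p)^{(\hat p+r)/p}\le C(1+\|\xi\|^{\hat p+r})$, which needs the $\|\xi\|^p$ form. Your version only gives $\|\xi\|^{(p+\hat r)(\hat p+r)/p}$. At the endpoint $\hat p=p-r$ this is $\|\xi\|^{p+\hat r}$, strictly worse than the claimed $\|\xi\|^{\hat p+(r\vee\hat r)}$.
\end{itemize}

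Your phrase ``after bounding $1+\|\xi\|^p$ appropriately'' silently assumes the estimate you did not establish. To prove the lemma as stated, you must either take \eqref{Bound_Xp} from \cite{Li_Mao2024spa}, as the paper does, or find a sharper argument. A single Lyapunov-type It\^o estimate of this kind charges the initial segment at order $p+\hat r$.

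There is also a minor slip in your ``main obstacle''. The Young piece landing on $|\psi(0)|^{p+\hat r}$ carries $\alpha_2\frac{p-2}{p+\hat r}$, not $\alpha_1\frac{p-2}{p+\hat r}$. The total $\alpha_2\big(\frac{p-2}{p+\hat r}+\frac{2+\hat r}{p+\hat r}e^{\lambda\tau}\big)$ is still below $\alpha_1$ for small $\lambda$, so the balancing itself is fine.
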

\begin{proof}
  The proofs of \eqref{Bound_Xp} and \eqref{Eq95} can be found in \cite[Theorem 2.1]{Li_Mao2024spa}.
We now begin to prove \eqref{Bound_XX3}.
Let $2 \le \hat    p  \le  p-r$. Applying the Young inequality, we conclude from  \eqref{AS4}  that
\begin{align}\label{SS2}
 & |\psi (0)|^{\hat p -2} \Big (  2 \psi (0) ^T f(\psi ) + (\hat p-1) |g(\psi)|^2  \Big ) \no \\
 & \le C(\hat p)  \Big ( |\psi (0)|^{\hat p -2} + |\psi (0)|^{\hat p} + 
 \frac{1}{\tau} \int_{-\tau}^{0} |\psi (0)|^{\hat p -2}| \psi (\th)|^2 d \th  \Big ) \no \\
 & \qu - \a_1 | \psi(0)|^{\hat p+  \hat r} +
   \frac{\a_2}{\tau} \int_{-\tau}^{0}|\psi (0)|^{\hat p -2}  | \psi (\th)|^{2+  \hat r}   d \th  \no \\
 & \le   C(\hat p)  
 \Big (
 1 + |\psi (0)|^{\hat p} + 
 \frac{1}{\tau} \int_{-\tau}^{0} \Big ( \frac{\hat p -2}{\hat p} |\psi (0)|^{\hat p } +  \frac{2}{\hat p}| \psi (\th)|^{\hat p} \Big  ) d \th 
  \Big ) \no \\ 
   & \qu - \a_1 | \psi(0)|^{\hat p+  \hat r} +
   \frac{\a_2}{\tau} \int_{-\tau}^{0}
   \Big ( \frac{\hat p-2}{\hat p+\hat r} |\psi (0)|^{\hat p + \hat r}  +  \frac{2+ \hat r}{\hat p+\hat r}| \psi (\th)|^{\hat p+  \hat r}   \Big )d \th  \no \\
   & 
   \le   C(\hat p)  
 \Big (
 1 + |\psi (0)|^{\hat p} + 
 \frac{1}{\tau} \int_{-\tau}^{0}| \psi (\th)|^{\hat p}  d \th   \Big ) \no \\
 & \qu - \Big ( \a_1 -   \frac{\hat p-2}{\hat p+\hat r}  \a_2 \Big ) | \psi(0)|^{\hat p+  \hat r}
 + \a_2\Big (  \frac{2+ \hat r}{\hat p+\hat r}\Big ) \frac{1}{ \tau} 
 \int_{-\tau}^{0}  | \psi (\th)|^{\hat p+  \hat r}  d \th, \qu \f \psi \in \mathcal{C_{\tau}}. 
\end{align}
Using the \Ito formula and \eqref{SS2} yields
\begin{align}\label{SS4}
& |X(t)|^{\hat p} -  |X(0)|^{\hat p}\no  \\
&  \le   \frac{\hat p}{2} \int_{0}^{t} |X(s)|^{\hat p-2}
\Big ( 2 X^T (s) f(X_s)+ (\hat p - 2) |g(X_s)|^2 \Big ) ds + 
\underbrace{ \hat p \int_{0}^{t} |X(s)|^{\hat p-2}X^T (s) g(X_s)dB(s)}_{:= M(t)}   \no \\
&\le C(\hat p)\frac{\hat p}{2} \int_{0}^{t}\Big ( 1+ |X(s)|^{\hat p} + \frac{1}{\tau}\intt |X(s+\th)|^{\hat p} d \th \Big )ds  + M(t) \no \\
& \qu - \frac{\hat p}{2} \left ( \Big ( \a_1 -   \frac{\hat p-2}{\hat p+\hat r}  \a_2 \Big ) 
\int_{0}^{t}|X(s)|^{\hat p+ \hat r} ds 
+ \a_2\Big (  \frac{2+ \hat r}{\hat p+\hat r}\Big )   \frac{1}{\tau}
\int_{0}^{t}\Big (\intt |X(s+\th)|^{\hat p + \hat r} d \th \Big )ds  \right ), \qu t \ge 0 . 
\end{align}
According to 
 the Fubini Theorem, we have
\begin{align*}
& \frac{1}{\tau} \int_{0}^{t} \Big ( \intt |X(s+ \th)|^{\hat p + \hat r}   d\th \Big )ds
= \frac{1}{\tau}  \intt  \Big ( \int_{0}^{t} |X(s+ \th)|^{\hat p + \hat r} ds \Big )   d\th \\
& \le \frac{1}{\tau} \intt  \Big ( \int_{-\tau}^{t}  |X(s)|^{\hat p + \hat r}  ds \Big ) d\th
=\int_{-\tau}^{t}  |X(s)|^{\hat p + \hat r}  ds
 \le \int_{0}^{t}  |X(s)|^{\hat p + \hat r}  ds + \tau  \| \xi \|^{\hat p + \hat r}
.  
\end{align*}
Inserting this \eqref{SS4} gives that 
\begin{align}\label{SS5}
 |X(t)|^{\hat p}
&\le C(\hat p) \int_{0}^{t}\Big ( 1+ \| \xi \|^{\hat p }+ |X(s)|^{\hat p}  \Big )ds  + M(t) \no \\
& \qu - \frac{\hat p}{2} \Big ( \a_1 -   \frac{\hat p-2}{\hat p+\hat r}  \a_2   -  \frac{2+ \hat r}{\hat p+\hat r} \a_2 \Big ) 
\int_{0}^{t}|X(s)|^{\hat p+ \hat r} ds 
+ \a_2\Big (  \frac{2+ \hat r}{\hat p+\hat r}\Big ) \tau  \| \xi \|^{\hat p + \hat r} \frac{\hat p}{2}.
\end{align}
Noting that $\displaystyle \a_1 -   \frac{\hat p-2}{\hat p+\hat r}  \a_2   -  \frac{2+ \hat r}{\hat p+\hat r} \a_2  = \a_1 - \a_2 >0$, we conclude from \eqref{SS5} that 
\begin{align}\label{SS6}
 |X(t)|^{\hat p}
&\le C(\hat p) \int_{0}^{t}\Big ( 1+ \| \xi \|^{\hat p + \hat r }+ |X(s)|^{\hat p}  \Big )ds  + \hat p \int_{0}^{t} |X(s)|^{\hat p-2}X^T (s) g(X_s)dB(s), \qu t \ge 0.
\end{align}
By  the \BDG inequality,
 we then derive that 
\begin{align}\label{SS88}
\E \left [ \sup_{0 \le t \le T} |X(t)|^{\hat p} \right ]  & \le 
C(\hat p) (1+ \| \xi \|^{\hat p + \hat r }) + \hat p
\E \left [ \sup_{0 \le t \le T} \Big |  \int_{0}^{t} |X(s)|^{\hat p-2}X^T (s) g(X_s)dB(s)  \Big | \right ] \no \\
& \le C(\hat p) (1+ \| \xi \|^{\hat p + \hat r }) 
+ 4 \sqrt{2} \hat p \E  \left [ \Big (\int_{0}^{T} |X(s)|^{2 \hat p -2} |g(X_s)|^2 ds \Big )^{1/2}   \right ] \no \\
&\le  C(\hat p) (1+ \| \xi \|^{\hat p + \hat r }) + 
4 \sqrt{2} \hat p \E  \left [\Big ( \sup_{0 \le s \le T} |X(s)|^{\hat p} \Big ) ^{1/2} 
 \Big (\int_{0}^{T} |X(s)|^{ \hat p -2} |g(X_s)|^2 ds \Big )^{1/2}   \right ] \no \\
&\le   C(\hat p) (1+ \| \xi \|^{\hat p + \hat r }) + 
\frac{1}{2}\E \left [ \sup_{0 \le s \le T} |X(s)|^{\hat p} \right ] 
+ 
16 \hat p^2 \E \left [  \int_{0}^{T} |X(s)|^{ \hat p -2} |g(X_s)|^2 ds \right ]. 
\end{align}
By  \eqref{Eq_B1} and the Young inequality as well as  \eqref{Bound_Xp}, we derive that
\begin{align}\label{SS77}
\E \left [  \int_{0}^{T} |X(s)|^{ \hat p -2} |g(X_s)|^2 ds \right ] 
& \le 
C \E \left [  \int_{0}^{T} |X(s)|^{ \hat p -2} 
\Big ( 1 + |X(s)|^{2+ r}  + \frac{1}{\tau}\intt |X(s+\th)|^{2+r} d \th  \Big )
 ds \right ]  \no \\
& \le C(\hat p, T) 
\E \left [  \int_{0}^{T}
\Big ( 1 + |X(s)|^{\hat p + r}  + \frac{1}{\tau}\intt |X(s+\th)|^{\hat p +r} d \th  \Big )
 ds \right ] \no \\
 &\le  C(\hat p, T) (1 + \| \xi \|^{\hat p +r})
\E \left [  \int_{0}^{T}
\Big ( 1 + |X(s)|^{\hat p + r}  \Big ) 
 ds \right ]  \no \\
& \le  C(\hat p, T) (1 + \| \xi \|^{\hat p +r}), 
\end{align}
where we have used the following estimate
\begin{align*}
& \frac{1}{\tau} \int_{0}^{T} \Big ( \intt |X(s+ \th)|^{\hat p +  r}   d\th \Big )ds
  \le \int_{-\tau}^{T}  |X(s)|^{\hat p +  r}  ds 
 \le \int_{0}^{T}  |X(s)|^{\hat p +  r}  ds + \tau  \| \xi \|^{\hat p +  r}
.  
\end{align*}
Inserting \eqref{SS77} into \eqref{SS88}, we get desired assertion \eqref{Bound_XX3}. Thus, the proof is finished. 
\end{proof}  
\section{Truncated EM method}
In this section, we construct an explicit scheme for SFDE \eqref{Eq0}. Under Assumption \ref{AS5_POLYNO},
we first choose a strictly increasing continuous function  $H: [1, \infty) \to \R_+$ such that $H (R)\to \infty$ as $R\to \infty$ and
\begin{align}\label{eq21}
 \sup_{\| \psi \| \ve \| \bar \psi \| \le R }
 \frac{|f(\psi) - f(\bar \psi )|}{(\Psi(\psi, \bar \psi )^{1/2}}
 \ve  \frac{|g(\psi) - g(\bar \psi )|^2}{\Psi(\psi, \bar \psi )}
\le H (R) , \quad  \f  R \ge 1,
\end{align}
where $\psi, \bar \psi \in \mathcal{C_{\tau}}$,  $\psi \neq \bar \psi$, and
\begin{align*}
\Psi(\psi, \bar \psi ):=
 | \psi (0) - \bar \psi (0)|^2 + \frac{1}{\tau} \int_{-\tau}^{0} | \psi (\th) - \bar \psi (\th)|^2  d \th.
\end{align*}
Denote $H^{-1}$ by the inverse function of $H$.
For a step size $\D \in (0,1]$ which is a fraction of $\tau$, say $\D = \tau / m$ for some integer $m \ge \tau$,
let us define a truncation mapping $\pd: \R^{n_1} \to \{ x \in \R^{n_1}: |x| \le H^{-1}(c_4 \D^{-\varrho} ) \}$ by
\begin{align}\label{eq23}
\pd(x) = \left ( |x| \we H^{-1}(c_4\D^{-\varrho} )  \right )\frac{x}{|x|}, \; \f x \in \R^{n_1},
\end{align}
where $ \varrho \in (0, 1/2)$ is a constant to be determined, $c_4: =  H(\|\xi \|) \ve H(1) \ve |f(\mathbf 0)|\ve |g(\mathbf 0)|$. Define $x/|x| = 0$ when $x =0$.
Define the truncated EM scheme for SFDE \eqref{Eq0} by
\begin{equation}\label{TEM1}
\begin{cases}
  \hat Y(\tk)=\xi(\tk),  & k = -m, -m+1, \cdots, 0,  \\
   Y(\tk)=\pd ( \hat Y(\tk)),  & k = -m, -m+1, \cdots,   \\
\hat Y(\tkk) = Y(\tk )+ f(Y_{\tk})\D + g(Y_{\tk}) \Delta B_{k\D}, & k = 0, 1,2, \cdots,
\end{cases}
\end{equation}
where $\Delta B_{k\D}=B(\tkk)-B(\tk)$, $Y_{k\Delta}=\{Y_{\tk}(\theta) : -\tau\leq\theta\leq0\}$ is a $\mathcal{C_{\tau}}$-valued random variable defined as follows: for $\theta \in [i\Delta,(i+1)\Delta]$, $i=-m, -m+1, \cdots, -1$,
\begin{align}\label{Eqq3.2}
Y_{\tk}(\theta)
& = \frac{(i+1)\Delta-\theta}{\Delta}Y((k+i)\D)
+\frac{\theta-i\Delta}{\Delta}Y((k+i+1)\D). 
\end{align}
That is, $Y_{\tk}(\cdot)$ is the linear interpolation of $Y((k-m)\D)$, $Y((k-m+1)\D)$, $\cdots$, $Y(\tk)$.
Correspondingly, we call
$Y_{\tk}$
as the numerical segment  generated by the truncated EM scheme \eqref{TEM1}.
For any $\hat p\ge2$, by the convex property of $u(x) = x^{\hat p}$, we conclude from \eqref{Eqq3.2} that
\begin{eqnarray*}
|Y_{\tk}(\theta)|^{\hat p} & \leq&\frac{(i+1)\Delta-\theta}{\Delta}|Y((k+i)\D)|^{\hat p}
+\frac{\theta-i\Delta}{\Delta}|Y((k+i+1)\D)|^{\hat p}\\
&\leq&|Y((k+i)\D)|^{\hat p} \ve |Y((k+i+1)\D)|^{\hat p}, \nonumber
\end{eqnarray*}
for $\theta \in [i\Delta,(i+1)\Delta]$, $i=-m, -m+1, \cdots, -1$.
Then
\begin{equation}\label{PP0}
\|Y_{\tk}\|^{\hat p}=\max _{-m\leq i\leq0}|Y((k+i)\D)|^{\hat p}
~~\text{and}~~
\|Y_{\tk}\|^{\hat p}\leq\|Y_{(k-1)\D}\|^{\hat p} \vee|Y(\tk)|^{\hat p}.
\end{equation}
From Assumption \ref{AS5_POLYNO} and \eqref{eq21}, we choose 
$H(R):= C R^r, \; \f R \ge 1.$
Thus,
\begin{align}\label{PP}
\| Y_{\tk} \| \le H^{-1}(c_4\D^{-\varrho})=C\D^{-\varrho/r}.
\end{align}
Then it follows from this and \eqref{eq21} that
\begin{equation}\label{PP2}
  \begin{cases}
\displaystyle |f(Y_{\tk})|^2  \le C \D^{-2\varrho} \Big ( 1+ |Y(\tk)|^2 + \frac{1}{\tau} \int_{-\tau}^{0} |Y_{\tk} (\th)|^2 d \th \Big )  \\
\displaystyle |g(Y_{\tk})|^2  \le C \D^{-\varrho} \Big (1+ |Y(\tk)|^2 + \frac{1}{\tau} \int_{-\tau}^{0} |Y_{\tk} (\th)|^2 d \th \Big ).
  \end{cases}
\end{equation}
Moreover,
\begin{align}\label{PP3}
|\pd (x) | \le |x| \qu \textrm{and} \qu |\pd (x) - \pd (y)| \le |x-y|, \qu \f x,y \in \R^{n_1},
\end{align}
see \cite[p. 880]{LiMao2019IMA}.
For any $t \in [0, \infty)$,  define an auxiliary process $Z(\cdot)$ by
\begin{equation}\label{Eq932}
  \begin{cases}
\displaystyle Z(t) = Y(\tk) + f (Y_{\tk}) (t - \tk) + g(Y_{\tk}) (B(t) - B(\tk)), &t \in [\tk, \tkk), \\
\displaystyle Z(t) = \xi (t), & t \in [-\tau, 0].
  \end{cases}
\end{equation}
For any $t \ge 0$, define
\begin{align}
&Z_t (\th) = Z(t + \th), \qu \f \th \in [-\tau, 0], \label{Geq1} \\
& \bar Y_t  = \sum_{k=0}^{\infty} Y_{\tk}\II_{[\tk, \tkk)} (t), \qu \f t \in [0, \infty). \label{Geq2} 
\end{align}
Define 
\begin{align}\label{Def1}
\underline t  = \lfloor t/\D \rfloor \D ,   \qu \f t \in [0, \infty).
\end{align}
Then we  have 
\begin{align}\label{G94}
\bar Y_t(0) = Y_{\underline t} (0) = Y(\underline t), \qu \f t \in [0, \infty).
\end{align}
For any  $R > \| \xi \|$, 
define
\begin{align}\label{Eq94322}
\rho_{R, \D}:= \inf \{ t \ge 0: |Z(t)| \ge R \}.
\end{align}
Then, $\rho_{R,\D}= \inf \{ t \ge 0: \|Z_t \| \ge R \}$.
          From \eqref{Eq932} we see that  $Z(\tk) = Y(k\D)$  and 
$ \lim_{t \uparrow \tk } Z(t) = \hat Y(\tk )$  for any $ t \in [(k-1)\D, \tk)$.
However, 
$Z(\cdot )$ is continuous in  $[-\tau, \rho_{R, \D} ]$.
 Thus, 
\begin{align}\label{Eq9444}
Z(t  \we \rho_{R,\D} ) = Y(0) + \int_{0}^{t  \we \rho_{R,\D}} f(\bar Y_s) ds + \int_{0}^{t  \we \rho_{R,\D}} g(\bar Y_s) dB(s).
\end{align}   

\section{Strong convergence order of  numerical segment}
In this section, we shall show the convergence order of the numerical segment generated by the truncated EM in the sense of  $L^2$. We now consider the uniform moment boundedness of numerical solution.     
\begin{lemma}\label{Lem3.12}
Let  Assumptions \ref{AS2_Kha} and \ref{AS5_POLYNO} hold
with $ 2 + (r\ve \hat r)  \le p <q $. Then
\begin{align}
\sup_{0 < \D \le 1}\sup_{ -m \le k < \infty} \E |Y(\tk)|^p & \le C(p) (1 + \|\xi \|^p) , \qu 
 \label{Bound_Xp312}  \\ 
\sup_{0 < \D \le 1}\sup_{ -\tau \le t < \infty} \E |Z(t)|^p & \le C(p) (1 + \|\xi \|^p) , \qu \label{Bound_Xp312_29}  \\
\sup_{0 < \D \le 1}\sup_{ -\tau \le t < \infty} \E  |\hat Y(\tk)|^p & \le C(p) (1 + \|\xi \|^p) .
\qu \label{Bound_Xp312_30} 
\end{align} 
For any $T >0$,  it holds that
\begin{align}\label{Eq952}
\PP \{\rho_{R,\D} \le T \} \le \frac{C(p,T)}{R^p},
\end{align}
let 
$ \displaystyle \hat p \in \Big  [2,   \frac{p}{1 + ( r \ve \hat r) /2} \Big ]$, 
then
\begin{align}
 \E \left [ \sup_{ -\tau \le k\D \le T }    |Y(\tk)|^{\hat p} \right ] & \le C(\hat p,T) (1 + \|\xi \|^{\hat p (1 + (r \ve \hat r)/2)}),  \label{Bound_XX242}   \\
  \E \left [ \sup_{ 0 \le t \le T }    | Z(t)|^{\hat p} \right ]  & \le C(\hat p,T) (1 + \|\xi \|^{\hat p (1 + (r \ve \hat r)/2)}),
  \label{Bound_XX2422} \\
   \E \left [ \sup_{ -\tau \le k\D \le T }    |\hat Y(\tk)|^{\hat p} \right ] & \le C(\hat p,T) (1 + \|\xi \|^{\hat p (1 + (r \ve \hat r)/2)}).  \label{Bound_XX24266}   
\end{align}
\end{lemma}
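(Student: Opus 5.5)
The plan follows the standard truncated EM route of \cite{Li_Mao2024spa}, with the segment structure \eqref{PP0} used to control delay terms. First I would prove the uniform bound \eqref{Bound_Xp312} at grid points. From \eqref{TEM1} and \eqref{PP3}, $|Y(\tkk)|^p\le|\hat Y(\tkk)|^p$, so it suffices to expand
\[
|\hat Y(\tkk)|^2=|Y(\tk)|^2+2\lan Y(\tk),f(Y_{\tk})\ran\D+|g(Y_{\tk})\D B_{k\D}|^2+|f(Y_{\tk})|^2\D^2+\text{(martingale terms)}.
\]
I would then raise this to the power $p/2$ via the binomial expansion in $\zeta_k:=(|\hat Y(\tkk)|^2-|Y(\tk)|^2)/|Y(\tk)|^2$ and take conditional expectations given $\mathcal F_{\tk}$. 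The leading term gives $\frac p2|Y(\tk)|^{p-2}\big(2\lan Y(\tk),f(Y_{\tk})\ran+(p-1)|g(Y_{\tk})|^2\big)\D$. I would bound it by \eqref{AS4}, producing $-\a_1|Y(\tk)|^{p+\hat r}$ together with a delay integral of $\|Y_{\tk}\|$-type terms. The higher-order terms I would control using \eqref{PP2}, which gives $|f|^2\D\le C\D^{1-2\varrho}(\cdots)$ and $|g|^2\le C\D^{-\varrho}(\cdots)$. Since $\varrho<1/2$, each of these carries a positive power of $\D$ times at most $|Y|^{p}$-type quantities; I would absorb the part with power above $p$ into the $-\a_1|Y|^{p+\hat r}$ term once $\D$ is small.

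Next, since $Y_{\tk}$ is the linear interpolation of grid values, convexity gives $\frac1\tau\intt|Y_{\tk}(\th)|^{p+\hat r}d\th\le\frac1m\sum_{i=-m}^{0}|Y((k+i)\D)|^{p+\hat r}$. Summing over $k$ then plays the role of the Fubini argument in Lemma \ref{lem2.11}: the total delay contribution is at most $\a_2\sum_k|Y(\tk)|^{p+\hat r}\D+C\|\xi\|^{p+\hat r}$. Because $\a_1>\a_2$, the negative term dominates.

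To obtain a bound uniform in time rather than only on $[0,T]$, I would multiply by $e^{\lambda k\D}$ before summing, with $\lambda$ small. The negative superlinear term then absorbs $C(1+|Y|^p)$ via $C|y|^p-(\a_1-\a_2)|y|^{p+\hat r}/2\le C$, and the exponential weight on the delay sums costs only a factor $e^{\lambda\tau}$, which is close to $1$. Taking expectations then yields \eqref{Bound_Xp312}. I expect \emph{this step to be the main obstacle}, namely keeping the weighted delay sum below the margin $\a_1-\a_2$ while also handling the cross moments of $\D B_{k\D}$ in the binomial expansion. The restriction $p<q$ enters here, through the factor $(p-1)$ against $q-1$.

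The bounds \eqref{Bound_Xp312_30} and \eqref{Bound_Xp312_29} then follow directly from \eqref{Bound_Xp312}. For $t\in[\tk,\tkk)$ I would write $Z(t)$ via \eqref{Eq932} and use \eqref{Eq_B11}, \eqref{Eq_B1} with $p\ge2+(r\ve\hat r)$. Alternatively, repeating the same conditional-expectation argument with $t-\tk$ in place of $\D$ gives the estimate uniformly in $t$.

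For \eqref{Eq952}, I would apply It\^o's formula to $|Z(t\we\rho_{R,\D})|^p$ using \eqref{Eq9444}. The difference between $\bar Y_s$ and $Z_s$ I would handle through \eqref{PP2}, as in \cite{Li_Mao2024spa}. This gives $\E|Z(T\we\rho_{R,\D})|^p\le C(p,T)$, and since $|Z(\rho_{R,\D})|=R$ on $\{\rho_{R,\D}\le T\}$, the claim follows by Chebyshev's inequality.

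Finally, for the supremum bounds \eqref{Bound_XX242}--\eqref{Bound_XX24266}, I would mimic \eqref{SS4}--\eqref{SS88}. I would write the discrete It\^o-type expansion of $|\hat Y(\tkk)|^{\hat p}$, drop the negative superlinear terms using $\a_1>\a_2$ as in \eqref{SS6}, and apply Burkholder--Davis--Gundy to the discrete martingale $\sum|Y(\tk)|^{\hat p-2}\lan Y(\tk),g(Y_{\tk})\D B_{k\D}\ran$. The remainder terms would be bounded by moments of order $\hat p(1+(r\ve\hat r)/2)\le p$ via \eqref{Eq_B11}, \eqref{Eq_B1} and \eqref{Bound_Xp312}. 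The same argument applied to $Z$, together with $|Y|\le|\hat Y|$, gives the other two bounds.
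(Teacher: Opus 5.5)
\paragraph{Main gap: the estimates for $\hat Y$ and $Z$.}
You derive \eqref{Bound_Xp312_30} and \eqref{Bound_Xp312_29} from \eqref{Bound_Xp312} through the growth bounds \eqref{Eq_B11}, \eqref{Eq_B1}, and this step does not work. Those bounds give $\E|f(Y_{\tk})|^p\lesssim\E|Y|^{p(1+r)}$ and $\E|g(Y_{\tk})|^p\lesssim\E|Y|^{p(1+r/2)}$, plus the corresponding interpolation integrals. These are moments strictly above order $p$, which \eqref{Bound_Xp312} does not control, and the hypothesis $p\ge 2+(r\ve\hat r)$ does not help.

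What closes the estimate is the truncation. By \eqref{PP2} and Jensen,
\[
\D^p|f(Y_{\tk})|^p+\D^{p/2}|g(Y_{\tk})|^p\le C\big(\D^{p(1-\varrho)}+\D^{p(1-\varrho)/2}\big)\Big(1+|Y(\tk)|^p+\frac1\tau\intt|Y_{\tk}(\th)|^p\,d\th\Big).
\]
By the convexity bound in \eqref{Tmp60016}, the last integral has expectation at most $\sup_j\E|Y(j\D)|^p$. This is exactly the paper's proof of \eqref{Bound_Xp312_30}. The same works for $Z(t)$, using that $B(t)-B(\tk)$ is independent of $\mathcal{F}_{\tk}$.

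\paragraph{The same problem in the maximal estimates.}
You claim all remainders are moments of order $\hat p(1+(r\ve\hat r)/2)\le p$ via \eqref{Eq_B11}, \eqref{Eq_B1}. That holds only for the diffusion terms. Drift terms such as $\D^2|Y(\tk)|^{\hat p-2}|f(Y_{\tk})|^2$ or $\D^{\hat p}|f(Y_{\tk})|^{\hat p}$ need moments of order $\hat p+2r$ or $\hat p(1+r)$. For example, with $\hat r\le r$ and $p=2+r$ the only admissible value is $\hat p=2$, and already $\D^2\E|f(Y_{\tk})|^2$ requires $\E|Y|^{2+2r}$. Every drift term must go through \eqref{PP2}, as the paper does in reaching \eqref{Eq_E444}.

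\paragraph{Comparison with the paper's route.}
With that repair, the rest is a different and heavier route than the paper's.

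The paper does not reprove \eqref{Bound_Xp312}, \eqref{Bound_Xp312_29} or \eqref{Eq952}; it quotes them from \cite{Li_Mao2024spa} (Theorem 3.1, Lemma 3.5, Lemma 3.3). So the step you call the main obstacle is not carried out there. If you do carry it out, two points need attention:
\begin{itemize}
\item Normalise $\zeta_k$ by $1+|Y(\tk)|^2$, since $Y(\tk)$ can vanish.
\item High powers of $\zeta_k$ contain $\big(\frac1\tau\intt|Y_{\tk}(\th)|^2d\th\big)^j/(1+|Y(\tk)|^2)^j$, which is unbounded. Young's inequality reduces these to $p$-th moments only while $2j\le p$. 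The Taylor remainder therefore needs sharper treatment, or the bound \eqref{PP}, rather than ``a positive power of $\D$ times $|Y|^p$-type quantities''.
\end{itemize}

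For \eqref{Bound_XX242} the paper avoids $\hat p$-th power expansions entirely. It expands $|Y(\tkk)|^2\le|\hat Y(\tkk)|^2$, sums, and raises to the power $\bar q=\hat p/2$. It then applies BDG to $\sum J_3$ and a crude bound to $\sum J_4$. It simply keeps the positive delay term $C\D\intt|Y_{\tk}(\th)|^{2+\hat r}d\th$ and bounds it by moments of order $\bar q(2+\hat r)\le p$. No $\a_1>\a_2$ cancellation and no non-integer Taylor remainder are needed. This is where the exponent $\hat p(1+(r\ve\hat r)/2)$ on $\|\xi\|$ comes from. Your pathwise cancellation as in \eqref{SS6} is valid but unnecessary here.

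For $\sup_t|Z(t)|$ the paper uses the explicit form \eqref{Eq932}, \eqref{Bound_XX242}, and Doob's inequality on each step. An It\^o expansion of $|Z|^{\hat p}$ would not let you apply \eqref{AS4} directly, because the state is $Z(s)$ while the coefficients are frozen at $Y_{\tk}$.
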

\begin{proof}
Assertions \eqref{Bound_Xp312} and \eqref{Bound_Xp312_29} follow  by \cite[Theorem 3.1]{Li_Mao2024spa} and 
\cite[Lemma 3.5]{Li_Mao2024spa}, respectively.
 The proof of \eqref{Eq952} can be found in \cite[Lemma 3.3]{Li_Mao2024spa}. \par
 By \eqref{TEM1} and \eqref{PP2}, we have
 \begin{align*}
 \E  |\hat Y(\tk)|^p 
 & \le 3^{p-1} \left (  \E  | Y(\tk)|^p + \D^p \E  | f (Y_{\tk})|^p +\D^{p/2} \E  | g (Y_{\tk})|^p \right ) \\
 & \le C(p) \left (  1+ \D^{p(1 - \varrho)}  + \D^{\frac{p(1 - \varrho)}{2}}  \right )
 \E \left ( 1 +  | Y(\tk)|^p  + \frac{1}{\tau}\intt | Y_{\tk}(\th)|^p d \th  \right ) \\ 
 & \le C(p) \Big ( 1  + \sup_{0 < \D \le 1} \sup_{k \ge -m} \E| Y(\tk)|^p  \Big ).
 \end{align*}
 Thus, the desired assertion \eqref{Bound_Xp312_30} follows from  \eqref{Bound_Xp312}. 
  
\par 
We now begin to proof \eqref{Bound_XX242}, the proof of which
can be shown by a modification of the proof of Lemma 3.7 in \cite{Higham_Mao2002}.
In light of \eqref{TEM1} and \eqref{PP3}, we arrive at
\begin{align}\label{Eq_E1}
& |Y(\tkk)|^2 \\
& \le |\hat Y(\tkk)|^2 = |Y(\tk )+ f(Y_{\tk})\D + g(Y_{\tk}) \Delta B_{k\D}|^2 \no \\
& = |Y(\tk)|^2 + 2 \lan Y(\tk), f(Y_{\tk}) \ran \D + |g(Y_{\tk}) \D B_{k\D}|^2  \no \\
& \qu + |f(Y_{\tk})|^2 \D^2 +  2\lan Y(\tk), g(Y_{\tk}) \D B_{k\D} \ran + 2 \D \lan f(Y_{\tk}, g(Y_{\tk}) \D B_{k\D} \ran \no \\
& \le |Y(\tk)|^2 +  2 \lan Y(\tk), f(Y_{\tk}) \ran \D + 2|g(Y_{\tk}) \D B_{k\D}|^2 + 2|f(Y_{\tk})|^2 \D^2 + J_3(k) \no \\
& \le |Y(\tk)|^2 +  2   (\lan Y(\tk), f(Y_{\tk}) \ran  + |g(Y_{\tk}) |^2 ) \D+ 2|f(Y_{\tk})|^2 \D^2 + J_3(k) + J_4(k). \no
\end{align}
where 
\begin{align*}
J_3(k)=2\lan Y(\tk), g(Y_{\tk}) \D B_{k\D} \ran, \qu \textrm{and}, \qu 
J_4(k)=|g(Y_{\tk})|^2 \big ( |\D B_{k\D}|^2 - \D \big ) .
\end{align*}
It follows from \eqref{AS4} and \eqref{PP2} that
\begin{align}\label{Eq_E444}
 |Y(\tkk)|^2 & \le |Y(\tk)|^2 +
C\D\Big (1+ |Y(\tk)|^2 + \frac{1}{\tau} \int_{-\tau}^{0} |Y_{\tk} (\th)|^2 d \th \Big )  \no \\
& \qu + C\D  \int_{-\tau}^{0} |Y_{\tk} (\th)|^{2+ \hat r}  d \th  + J_3(k) + J_4(k), \qu k \ge 0.
\end{align}
Assume that $n$ and $N$ are positive integers such that  $  n \le N = \lf T /\D \rf $.
Then, we conclude from \eqref{Eq_E444} that
\begin{align}\label{TT_1}
 |Y(n\D )|^2 & \le |\xi (0)|^2 +
C\D \sum_{k=0}^{n-1}  \Big (1+ |Y(\tk)|^2 + \frac{1}{\tau} \int_{-\tau}^{0} |Y_{\tk} (\th)|^2 d \th \Big )  \no \\
& \qu + C\D  \sum_{k=0}^{n-1} \int_{-\tau}^{0} |Y_{\tk} (\th)|^{2+ \hat r}  d \th  + \sum_{k=0}^{n-1} \Big ( J_3(k) +  J_4(k) \Big ).
\end{align}
Let $ \hat p \in \left [2,   \frac{p}{1 + ( r \ve \hat r) /2} \right ]$ and 
$   \bar q = \hat p /2$.
Raising both sides to the power $\hat p$ and using the 
 elementary inequality
\begin{align*}
\Big | \sum_{k=1}^{n}  a_k \Big |^{\bar  q} \le  n^{\bar q-1}\sum_{k=1}^{n}  |a_k|^{\bar q }, \qu \bar q \ge 1, \; a_k \in \R^{n_1}.
\end{align*}
we  conclude from \eqref{TT_1} that
\begin{align*}
\frac{|Y(n\D )|^{\hat p }}{7^{\bar q-1}}=
\frac{|Y(n\D )|^{2 \bar q }}{7^{\bar q-1}}  & \le |\xi (0)|^{\bar q} +
C\D^{\bar q}  n^{\bar q -1}\sum_{k=0}^{n-1}  \Big (1+ |Y(\tk)|^{2\bar q} + \frac{1}{\tau} \int_{-\tau}^{0} |Y_{\tk} (\th)|^{2\bar q} d \th \Big )  \no \\
& \qu + C\D^{\bar q}  n^{\bar q -1}\sum_{k=0}^{n-1} \int_{-\tau}^{0} |Y_{\tk} (\th)|^{\bar q(2+ \hat r)}  d \th  + \Big | \sum_{k=0}^{n-1}  J_3(k) \Big |^{\bar q} + \Big | \sum_{k=0}^{n-1}  J_4(k) \Big |^{\bar q} \no \\
& \le C(T) + 
C T^{\bar q -1} \D \sum_{k=0}^{n-1}  \Big ( 1+  |Y(\tk)|^{2\bar q} + \frac{1}{\tau} \int_{-\tau}^{0} |Y_{\tk} (\th)|^{2\bar q} d \th \Big )  \no \\
& \qu + C T^{\bar q -1} \D\sum_{k=0}^{n-1} \int_{-\tau}^{0} |Y_{\tk} (\th)|^{\bar q(2+ \hat r)}  d \th  + \Big | \sum_{k=0}^{n-1}  J_3(k) \Big |^{\bar q}  + \Big | \sum_{k=0}^{n-1}  J_4(k) \Big |^{\bar q}. 
\end{align*}
Consequently,
\begin{align}\label{TT4}
& \frac{\E \left[ \sup_{0 \le n \le N} |Y(n\D)|^{2 \bar q} \right ] }{7^{\bar q-1}}   \\
&  \le C(T)  + C(T) \D \sum_{k=0}^{N-1} (1 + \E |Y(k\D)|^{2 \bar q}) \no \\
& +  C(T)\D\E \left [ \sum_{k=0}^{N-1}  \frac{1}{\tau} \intt |Y_{k\Delta} (\th)|^{2 \bar q} d \th \right ]
  +  C(T)\D\E \left [ \sum_{k=0}^{N-1}  \intt |Y_{k\Delta} (\th)|^{\bar q(2+\hat r)}  d \th \right ]  \no \\
&
+ \E \left[\sup_{0 \le n \le N} \Big | \sum_{k=0}^{n-1}  J_3(k) \Big |^{\bar q} \right ]
+ \E \left[\sup_{0 \le n \le N} \Big | \sum_{k=0}^{n-1}  J_4(k) \Big |^{\bar q} \right ].
\no
\end{align}
Applying \eqref{Eqq3.2} and the Jensen inequality gives
\begin{align}\label{Tmp60016}
& \intt |Y_{k\Delta} (\th)|^{2\bar q}  d \th  
 = \sum_{j=-m}^{-1}\int_{j\D}^{(j+1)\D} |Y_{k\Delta} (\th)|^{2\bar q}  d\th  \no \\
& = \sum_{j=-m}^{-1}\int_{j\D}^{(j+1)\D} 
\Big | \frac{(j+1)\D -\th}{\D} 
  Y((k+j)\D)  +  \frac{\th - j\D}{\D} 
  Y((k+j+1)\D) \Big |^{2\bar q}  d \th \no \\
& \le  \sum_{j=-m}^{-1}\int_{j\D}^{(j+1)\D} 
\Big ( \frac{(j+1)\D -\th}{\D} 
  |Y((k+j)\D)|^{2\bar q}  +  \frac{\th - j\D}{\D} 
 | Y((k+j+1)\D)|^{2\bar q} \Big )  d \th \no \\
& = \frac{\D}{2} \sum_{j=-m}^{-1}
\Big ( | Y((k+j)\D)|^{2\bar q}+ | Y((k+j+1)\D)|^{2\bar q}\Big ).
\end{align}
Thus, 
%
\begin{align}\label{Est_21}
& \sum_{k=0}^{N-1}  \frac{1}{\tau} \intt |Y_{k\Delta} (\th)|^{2\bar q} d \th
 \le  \frac{\D}{2\tau }   \sum_{j=-m}^{-1}  \sum_{k=0}^{N-1}
\Big ( | Y((k+j)\D)|^{2 \bar q}  +  | Y((k+j+1)\D)|^{2 \bar q}\Big )  \no \\
& \le  \frac{1}{m}  \sum_{j=-m}^{-1}  \sum_{k=-m}^{N-1} | Y(k\D)|^{2 \bar q}  
=\sum_{k=-m}^{N-1} |Y(k\D)|^{2 \bar q} 
 \le  m \| \xi \|^{2 \bar q}+ \sum_{k=0}^{N-1} |Y(k\D)|^{2 \bar q} .
\end{align}
Similarly, 
\begin{align}\label{Est_Zk1}
\sum_{k=0}^{N-1}  \frac{1}{\tau} \intt |Y_{k\Delta} (\th)|^{\bar q (2+\hat r)}  d \th
 \le \sum_{k=-m}^{N-1} |Y(k\D)|^{\bar q (2+\hat r)}\le 
 m \| \xi \|^{\bar q (2+\hat r)}+ \sum_{k=0}^{N-1} |Y(k\D)|^{\bar q (2+\hat r)} .
\end{align}
Inserting \eqref{Est_Zk1} and \eqref{Est_21} into \eqref{TT4}, one have
\begin{align}\label{TT8}
& \frac{\E \left[ \sup_{0 \le n \le N} |Y(n\D)|^{2 \bar q} \right ] }{7^{\bar q-1}}   \\
&  \le C(T)  (1 + \tau \| \xi \|^{\bar q (2+\hat r)} ) + C(T) \D \sum_{k=0}^{N-1} ( \E |Y(k\D)|^{2 \bar q} + \E |Y(k\D)|^{ \bar q (2 + \hat r)}) \no \\
& + \E \left[\sup_{0 \le n \le N} \Big | \sum_{k=0}^{n-1}  J_3(k) \Big |^{\bar q} \right ]
+ \E \left[\sup_{0 \le n \le N} \Big | \sum_{k=0}^{n-1}  J_4(k) \Big |^{\bar q} \right ]
\no \\
& \le C(T)  (1 +  \| \xi \|^{\bar q (2+\hat r)} )
+ \E \left[\sup_{0 \le n \le N} \Big | \sum_{k=0}^{n-1}  J_3(k) \Big |^{\bar q} \right ]
+ \E \left[\sup_{0 \le n \le N} \Big | \sum_{k=0}^{n-1}  J_4(k) \Big |^{\bar q} \right ], \no 
\end{align}
where \eqref{Bound_Xp312} has been used.
By virtue of the \BDG  inequality and the elementary inequality
$ab \le \eps a^2 + \frac{b^2}{4\eps}$, one obtain
\begin{align}\label{Eq_G51}
& \E \left[\sup_{0 \le n \le N} \Big | \sum_{k=0}^{n-1}  J_3(k) \Big |^{\bar q} \right ]
=  \E \left[\sup_{0 \le n \le N} \Big | \sum_{k=0}^{n-1} \lan Y(k \D), g(Y_{k \D}) \D B_{k\D} \ran \Big |^{\bar q} \right ]  \\
& \le 4 \sqrt{2}
\E \left [  \sum_{k=0}^{N-1} |\lan Y(k \D), g(Y_{k \D}) \ran |^2 \D \right ]^{\bar q /2} 
  \le 4 \sqrt{2}
\E \left [ \Big ( \sup_{0 \le k \le N} |Y(k \D)|^2 \Big )
 \sum_{k=0}^{N-1} | g(Y_{k \D}) |^2 \D \right ]^{\bar q /2} \no \\
& \le \frac{1}{2 \cdot 7^{\bar q -1}} \E \left [  \sup_{0 \le k \le N} |Y(k\D)|^{2 \bar q} \right ]
+C(\bar q) \E \left [ \sum_{k=0}^{N-1}| g(Y_{k \D}) |^{2 \bar q } \D\right ]. \no
\end{align}
According to \eqref{Bound_Xp312} and \eqref{Eq_B1},
  we have
\begin{align}\label{EQ_G61}
\E \left [ \sum_{k=0}^{N-1}| g(Y_{k \D}) |^{2 \bar q } \D\right ]
&\le C(\bar q) \E \left [ \sum_{k=0}^{N-1}\Big ( 1 +  | Y(k \D)) |^{2 \bar q}  +
\frac{1}{\tau}\intt |Y_{k \D} (\th)|^{\bar q (2+r) } d \th
 \Big )\D\right ]   \no \\
 & \le C(\bar q) \sum_{k=-m}^{N-1}\Big ( 1 +  \E| Y(k \D)) |^{\bar q (2+r)}
 \Big )\D
  \le C(\bar q,T) (1 + \| \xi \|^{\bar q (2+r)}).
\end{align}
Similarly,
\begin{align}\label{Eq_G81}
& \E \left[\sup_{0 \le n \le N} \Big | \sum_{k=0}^{n-1}  J_4(k) \Big |^{\bar q} \right ]
\le  \E \left[ \sup_{0 \le n \le N} n^{\bar q -1} \sum_{k=0}^{n-1}|g(Y_{k \D})|^{2 \bar q} \big ( |\D B_{k\D}|^2 + \D \big )^{ \bar q}  \right ] \no  \\
& \le C(\bar q) \E \left[  N^{\bar q -1} \sum_{k=0}^{N-1}|g(Y_{k \D})|^{2 \bar q} \big ( |\D B_{k\D}|^{2 \bar q} + \D ^{\bar q} \big )  \right ]  \no \\
& \le C(\bar q)N^{\bar q -1} \D^{\bar q}  \sum_{k=0}^{N-1} \E |g(Y_{k \D})|^{2 \bar q}
\le C(\bar q,T) (1 + \| \xi \|^{\bar q (2+r)}).
\end{align}
It then follows from \eqref{TT8}--\eqref{Eq_G81} that
\begin{align*}
&
\E \left[ \sup_{0 \le n \le N} |Y(n\D)|^{2 \bar q} \right ] 
\le \frac{1}{2} \E \left[ \sup_{0 \le n \le N} |Y(n\D)|^{2 \bar q} \right ] +  C(\bar q,T) (1 + \| \xi \|^{\bar q (2+r)}), 
\end{align*}  
which implies the assertion \eqref{Bound_XX242}. 
\par 
Now, we begin to establish \eqref{Bound_XX2422}.
 By virtue of \eqref{Eq932} and \eqref{PP2}, we deduce
 \begin{align*}
 |Z(t)|^{\hat p} 
 & \le 3^{\hat p-1} \left (    | Y(\tk)|^{\hat p}  + \D^{\hat p}   | f (Y_{\tk})|^{\hat p}  +    | g (Y_{\tk}) (B(t) - B (\tk))|^{\hat p}  \right ) \\
 & \le C(\hat p) \left (  1+ \D^{\hat p(1 - \varrho)}    \right )
 \left ( 1 +  | Y(\tk)|^{\hat p }  + \frac{1}{\tau}\intt | Y_{\tk}(\th)|^{\hat p } d \th  \right ) +  C(\hat p) | g (Y_{\tk}) (B(t) - B (\tk))|^{\hat p}  \no \\
 & \le  C(\hat p) \left ( 1+
 | Y(\tk)|^{\hat p }  + \frac{1}{\tau}\intt | Y_{\tk}(\th)|^{\hat p } d \th 
 +| g (Y_{\tk}) (B(t) - B (\tk))|^{\hat p}
  \right )
 \end{align*}
Thus,  applying \eqref{Bound_XX242} and the Doob martigale inequality gives
\begin{align}
& \E \left[ \sup_{0 \le t \le T}  |Z(t)|^{\hat p}  \right ] \no \\
& \le 
 C(\hat p) \left ( 1+
\E \left [ \sup_{0 \le k \le N}| Y(\tk)|^{\hat p }   \right ]+ 
\E \left [ \sup_{0 \le k \le N}
\frac{1}{\tau}\intt | Y_{\tk}(\th)|^{\hat p } d \th  \right ] 
 + \E \left[ \sup_{0 \le t \le T}  | g (Y_{\underline t  }) (B(t) - B (\underline t  ))|^{\hat p} \right ]
  \right ) \no \\ 
 & \le C(\hat p,T) 
 \left ( 1+
\E \left [ \sup_{-m  \le k \le N}| Y(\tk)|^{\hat p }   \right ]
 + \sum_{k=0}^{N}\E \left[ \sup_{\tk \le t \le \tkk}  | g (Y_{\tk  }) (B(t) - B (\tk ))|^{\hat p} \right ] \right )  \no \\
 & \le C(\hat p,T) (1 + \|\xi \|^{p}) + C(T)   \sum_{k=0}^{N}\E \left[  | g (Y_{\tk  }) \D B_{k \D } |^{\hat p} \right ] \no\\ 
  &\le C(\hat p,T) (1 + \|\xi \|^{p}) + C(T)  \D^{\hat p /2} \sum_{k=0}^{N}\E \left[  | g (Y_{\tk  })  |^{\hat p} \right ] \no \\
 & \le C(\hat p,T) (1 + \|\xi \|^{p}) N\D^{\hat p /2}   
 \le C(\hat p,T) (1 + \|\xi \|^{p}). 
\end{align}
Similarly, we can show that \eqref{Bound_XX24266}  also holds. 
Thus,  the proof is finished. 
\end{proof}
We now cite a known result as a lemma, the proofs of \eqref{Tem103},   \eqref{G99_10_1} and \eqref{G99_10} can be found in  \cite[Lemma 3.6, Lemm 3.8 and Theorem 3.3]{Li_Mao2024spa}, respectively.
\begin{lemma}\label{Lem904}
Let Assumptions \ref{AS2_Kha}, \ref{AS5_POLYNO} and \ref{AS3} hold. 
\begin{itemize}
  \item  If $ 2 + r  < q$, then for $\hat p \in \left [2, \frac{q}{1+ r/2} \right )$ and $\D \in (0,1]$, 
 \begin{align}\label{Tem103}
\sup_{0 \le t < \infty} \sup_{ -\tau \le \th \le 0} \E |Z_t(\th) - \bar Y_t(\th)|^{\hat p} \le C(\hat p,T) \D^{\hat p/2} ,
\end{align} 
\item If $ 2 + 3 r <q $, then for $ \hat p \in \left [ 2, \frac{q}{1+ 3r/2}  \right ) $
and $\D \in (0,1]$,
    \begin{align}
 \sup _{0 \le t \le T }\E |X(t) - Z( t)|^{\hat p}\le C(\hat p,T) \D^{\hat p/2}, \label{G99_10_1}     \\
 \sup _{0 \le t \le T }\E |X(t) - Y(\underline t)|^{\hat p}\le C(\hat p,T) \D^{\hat p/2}. \label{G99_10}
\end{align}
\end{itemize}
\end{lemma}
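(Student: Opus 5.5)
The statement just above is Lemma~\ref{Lem904}, which the paper cites from \cite{Li_Mao2024spa}. I would nonetheless organize a proof so that it relies only on Lemmas~\ref{lem2.11} and~\ref{Lem3.12}.

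\textbf{Estimate \eqref{Tem103}.} Fix $t\ge0$ and $\th\in[-\tau,0]$, and set $s=t+\th$.

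If $s\le 0$, then $Z(s)=\xi(s)$. By \eqref{Eqq3.2}, $\bar Y_t(\th)$ is a convex combination of two grid values of $Y$ at nearby times. Each such value is either a truncated value $\pd(\xi(j\D))$ or a value $Y(j\D)$ with $j\ge0$. The truncation is inactive on the initial segment, since $c_4\ge H(\|\xi\|)$ gives $H^{-1}(c_4\D^{-\varrho})\ge\|\xi\|$. Hence the pure-history part is bounded by Assumption~\ref{AS3}: $|\xi(t_1)-\xi(t_2)|^{\hat p}\le C\D^{\hat p/2}$. Mixed intervals, which straddle $0$, reduce to the next case.

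If $s>0$, I write
\[
\bar Y_t(\th)=\lambda Y((k+i)\D)+(1-\lambda)Y((k+i+1)\D), \qquad k=\lf t/\D\rf .
\]
Then $|Z(s)-\bar Y_t(\th)|$ is bounded by differences of the form $|Z(s)-Z(j\D)|$, using $Z(j\D)=Y(j\D)$, with $|s-j\D|\le 2\D$.

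Each such increment is controlled by the definition \eqref{Eq932} of $Z$, possibly across one grid point. Across a grid point, the jump $\hat Y(j\D)-Y(j\D)$ must be handled. I would bound $|Z(s)-Z(j\D)|^{\hat p}$ by
\[
C\bigl(\D^{\hat p}|f(Y_{j\D})|^{\hat p}+|g(Y_{j\D})|^{\hat p}|B(s)-B(j\D)|^{\hat p}\bigr)
\]
plus the same expression over the adjacent step. Then:
\begin{itemize}
\item independence of the Brownian increment from $\mathcal F_{j\D}$ gives $\E|\cdot|^{\hat p}\le C\D^{\hat p/2}\E|g(Y_{j\D})|^{\hat p}$;
\item by \eqref{Eq_B11}--\eqref{Eq_B1}, $\E|f|^{\hat p}$ and $\E|g|^{\hat p}$ are bounded through moments of $Y$ of order $\hat p(1+r/2)\le q$ (with $\hat p(1+r)$ for $f$, or else use the truncation bound \eqref{PP2}, where $\D^{\hat p(1-\varrho)}\le\D^{\hat p/2}$);
\item these moments are bounded uniformly in $k$ by \eqref{Bound_Xp312}.
\end{itemize}
This gives \eqref{Tem103}, uniformly in $t$.

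\textbf{Estimate \eqref{G99_10_1}.} I would use the stopping-time method.
\begin{itemize}
\item Let $\theta_R=\tau_R\we\rho_{R,\D}$ and $e(t)=X(t)-Z(t)$, so that $e(t\we\theta_R)=\int_0^{t\we\theta_R}\bigl(f(X_s)-f(\bar Y_s)\bigr)\,ds+\int_0^{t\we\theta_R}\bigl(g(X_s)-g(\bar Y_s)\bigr)\,dB(s)$ by \eqref{Eq9444}.
\item Apply Itô's formula to $|e|^{\hat p}$ and split $f(X_s)-f(\bar Y_s)=\bigl(f(X_s)-f(Z_s)\bigr)+\bigl(f(Z_s)-f(\bar Y_s)\bigr)$, and likewise for $g$.
\item For the first parts, use the one-sided condition of Assumption~\ref{AS2_Kha}. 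Its superlinear delay term $\a_2\tau^{-1}\int|\cdot|^2|\cdot|^{\hat r}$ is absorbed by the $-\a_1$ term after Fubini, exactly as in the proof of Lemma~\ref{lem2.11}, using $\a_1>\a_2$.
\item The second parts are estimated via Assumption~\ref{AS5_POLYNO} and Hölder's inequality. They require $\E|Z_s(\th)-\bar Y_s(\th)|^{\hat p(1+\cdot)}$ times polynomial moments of order $\hat p r$ or so; this is where $\hat p<q/(1+3r/2)$ enters. The factor $\D^{\hat p/2}$ comes from \eqref{Tem103}.
\item Gronwall then gives $\E|e(t\we\theta_R)|^{\hat p}\le C\D^{\hat p/2}$.
\item Remove the stopping time with Young's inequality and \eqref{Eq95}, \eqref{Eq952}: $\PP(\theta_R\le T)\le CR^{-p}$. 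Choose $R$ as a power of $\D^{-1}$ so that $R^{-p}$-type terms are $O(\D^{\hat p/2})$. The truncation level $H^{-1}(c_4\D^{-\varrho})\ge R$ guarantees the truncated and untruncated coefficients agree before $\theta_R$, which is what couples $\varrho$ with the moment exponents.
\end{itemize}

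\textbf{Estimate \eqref{G99_10}, and the main obstacle.} \eqref{G99_10} follows from \eqref{G99_10_1} and $Y(\underline t)=\bar Y_t(0)$, using \eqref{Tem103} at $\th=0$ and the triangle inequality. The main obstacle is the stopping-time removal in the previous step. There, the exponents $p$, $\hat p$, $r$, $\varrho$ must be balanced so that the tail term is $O(\D^{\hat p/2})$, and the integrability required by Hölder's inequality must stay below $q$.
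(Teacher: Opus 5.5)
The paper gives no proof of this lemma; it only refers to Li, Mao and Song. Your reconstruction therefore has to stand on its own.

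\textbf{What works.} Your argument for \eqref{Tem103} is sound, and so is the deduction of \eqref{G99_10} from \eqref{G99_10_1} and \eqref{Tem103} at $\th=0$. One point should be made explicit. Crossing a grid point is handled by the contraction \eqref{PP3}: since $Y(j\D)=\pd(Y(j\D))$, one has $|Y((j+1)\D)-Y(j\D)|\le|\hat Y((j+1)\D)-Y(j\D)|$, which is a one-step increment. Estimating the jump $\hat Y-Y$ separately would only give a $\D^{2\varrho/r}$-type bound, as in Lemma~\ref{Lem904_22}.

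\textbf{The gap.} It lies in \eqref{G99_10_1} for $\hat p>2$, at the step where you absorb the superlinear delay term ``exactly as in the proof of Lemma~\ref{lem2.11}''. Write $e=X-Z$ and $W(u):=|X(u)|^{\hat r}+|Z(u)|^{\hat r}$. Itô's formula for $|e|^{\hat p}$ together with Assumption~\ref{AS2_Kha} leaves the drift term
\[
\frac{\hat p\,\a_2}{2\tau}\,|e(s)|^{\hat p-2}\int_{-\tau}^{0}|e(s+\th)|^2\,W(s+\th)\,d\th .
\]
Young's inequality splits $|e(s)|^{\hat p-2}|e(s+\th)|^2W(s+\th)$ into two pieces:
\begin{itemize}
\item $\frac{2}{\hat p}|e(s+\th)|^{\hat p}W(s+\th)$, which the Fubini shift does handle;
\item $\frac{\hat p-2}{\hat p}|e(s)|^{\hat p}W(s+\th)$, the current error weighted by the delayed state.
\end{itemize}
The second piece cannot be absorbed by $-\a_1|e(s)|^{\hat p}W(s)$, cannot be shifted, and is not of Gronwall form because $W$ is unbounded.

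In Lemma~\ref{lem2.11} this problem never arises. There the estimated quantity and the weight are the same function $\psi$, so Young yields the pure powers $|\psi(0)|^{\hat p+\hat r}$ and $|\psi(\th)|^{\hat p+\hat r}$. Using $W\le 2R^{\hat r}$ before your stopping time $\theta_R$ does not help either: it gives a factor $e^{CR^{\hat r}T}$, which is fatal when $R$ is a power of $\D^{-1}$.

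As written, your Gronwall step closes only for $\hat p=2$, where the shift works pathwise as in the proof of Theorem~\ref{Th942}. Yet the paper later needs \eqref{G99_10_1} and \eqref{G99_10} at the exponent $2p/(p-r)>2$.

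\textbf{The missing idea.} Put the delayed term into a Lyapunov--Krasovskii functional before raising to the power $\hat p/2$. Set
\[
V(t)=|e(t)|^2+\frac{\a_2}{\tau}\int_{-\tau}^{0}\int_{t+\th}^{t}|e(u)|^2W(u)\,du\,d\th .
\]
Its integral part contributes $\a_2|e(t)|^2W(t)-\frac{\a_2}{\tau}\int_{-\tau}^{0}|e(t+\th)|^2W(t+\th)\,d\th$ to the drift. Apply Itô's formula to $V^{\hat p/2}$ up to $\theta_R$ and proceed as follows.
\begin{itemize}
\item The delayed term from Assumption~\ref{AS2_Kha} and the derivative of the integral part now carry the same factor $V(t)^{\hat p/2-1}$. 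They cancel, leaving $-(\a_1-\a_2)|e|^2W\le 0$.
\item Since $|e|^2\le V$, the Itô correction satisfies $\frac{\hat p(\hat p-2)}{2}V^{\hat p/2-2}|e^T(g(X_s)-g(\bar Y_s))|^2\le \frac{\hat p(\hat p-2)}{2}V^{\hat p/2-1}|g(X_s)-g(\bar Y_s)|^2$. This gives total coefficient $\hat p-1<q-1$ on the diffusion difference, which is exactly what the assumption allows after splitting off $g(Z_s)-g(\bar Y_s)$.
\item The remaining terms are bounded by $CV^{\hat p/2}+\frac{C}{\tau}\int_{-\tau}^{0}|e(s+\th)|^{\hat p}d\th+C|f(Z_s)-f(\bar Y_s)|^{\hat p}+C|g(Z_s)-g(\bar Y_s)|^{\hat p}$. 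Your Hölder step with $\hat p<q/(1+3r/2)$ then applies to these.
\item Gronwall for $\E V(t\we\theta_R)^{\hat p/2}\ge\E|e(t\we\theta_R)|^{\hat p}$ gives the bound $C\D^{\hat p/2}$. Your stopping-time removal then goes through as planned.
\end{itemize}
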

For any
$ \hat p \in \left [ 2, \frac{q}{1+ 3r/2}  \right ) $, we conclude from
 \eqref{G99_10_1} and \eqref{Tem103} that 
\begin{align}\label{Za12}
& \sup_{ 0 \le t \le T} \sup_{-\tau \le \th \le 0} \E \Big [|X_t (\th) - \bar Y_t (\th)|^{\hat p } \Big ]  \no \\
& \le C(\hat p) \Big ( \sup_{ 0 \le t \le T} \sup_{-\tau \le \th \le 0}\E \Big [|X_t (\th) - Z_t (\th)|^{\hat p } \Big ] +
\sup_{ 0 \le t \le T} \sup_{-\tau \le \th \le 0} \E \Big [|Z_t (\th) - \bar Y_t (\th)|^{\hat p } \Big ]    \Big )   \no \\
& \le 
C(\hat p) \Big ( \sup_{-\tau \le t \le T} \E \Big [|X (t) - Z (t)|^{\hat p } \Big ] +
\sup_{ 0 \le t \le T} \sup_{-\tau \le \th \le 0}\E \Big [|Z_t (\th) - \bar Y_t (\th)|^{\hat p } \Big ]    \Big )   
 \le 
 C(\hat p, T) \D^{\hat p/2 }.
\end{align}
%
 
 To establish the convergence order of the truncated EM segment sequence, we require a result stronger than \eqref{Tem103}, which is provided by Lemma \ref{Lem904_22}.
\begin{lemma}\label{Lem904_22}
Let  Assumptions \ref{AS2_Kha}, \ref{AS5_POLYNO} and \ref{AS3} hold
with 
\begin{align}
\label{cond42}
\big (2+r \big )\big (2 + (r \ve \hat r)\big) \le p <q.
\end{align}
Then for any $\D \in (0,1]$,
%
\begin{align}\label{Tmp11_01}
\sup_{0 \le t \le T} 
\E \Big [ \sup_{-\tau \le \th \le 0} |Z_t (\th) - \bar Y_t (\th)|^2 \Big ] 
\le C(T) \D^{1/2 \we 2 \varrho /r}.
\end{align}
\end{lemma}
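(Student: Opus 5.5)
We need to bound $\E\sup_{\theta}|Z(t+\theta)-Y_{\underline t}(\theta)|^2$. Fix $t\in[0,T]$ with $\underline t = k\D$. For $\theta\in[-\tau,0]$ we have $Z_t(\theta)=Z(t+\theta)$ and $\bar Y_t(\theta)=Y_{k\D}(\theta)$. The latter is the linear interpolation of the grid values $Y((k+i)\D)=Z((k+i)\D)$ for $(k+i)\D\ge 0$, and of $\pd(\xi)$ on the negative grid. So the difference decomposes into three parts:
\begin{itemize}
\item a shift term $Z(t+\theta)-Z(k\D+\theta)$, with time shift $t-k\D<\D$;
\item an interpolation term $Z(k\D+\theta)-Y_{k\D}(\theta)$, which is the distance of the path $Z$ from its piecewise-linear interpolation at the grid points;
\item the initial segment, where $Z=\xi$ and the grid values are $\pd(\xi(j\D))=\xi(j\D)$ because $\|\xi\|\le H^{-1}(c_4\D^{-\varrho})$ by the choice of $c_4$. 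There Assumption \ref{AS3} gives a deterministic bound $C\D$.
\end{itemize}
Every remaining piece is therefore controlled by the modulus of continuity of $Z$ over intervals of length at most $2\D$:
\begin{align*}
\E\Big[\sup_{-\tau\le\theta\le0}|Z_t(\theta)-\bar Y_t(\theta)|^2\Big]\le C\D+C\,\E\Big[\max_{j}\sup_{s\in[j\D,(j+2)\D]\cap[0,T]}|Z(s)-Z(j\D)|^2\Big],
\end{align*}
where $j$ runs over $O(T/\D)$ indices. One must also account for the jumps of $Z$ at grid points, $\hat Y(j\D)\ne Y(j\D)$. The cleanest way is to note that within $[j\D,(j+1)\D)$ we have $Z(s)-Z(j\D)=f(Y_{j\D})(s-j\D)+g(Y_{j\D})(B(s)-B(j\D))$, so only within-step increments are needed.

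The drift part is bounded by $\D^2\max_j|f(Y_{j\D})|^2$. For the truncated scheme, \eqref{PP2} alone gives only $\D^{2-2\varrho}$ times the moments. I would instead use \eqref{Eq_B11} together with $\E\max_j|Y(j\D)|^{2(1+r)}$ from \eqref{Bound_XX242}, which is legitimate if $2(1+r)\le p/(1+(r\ve\hat r)/2)$; condition \eqref{cond42} is designed for exactly this. The result is $O(\D^2)$.

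The diffusion part, $\E\max_j\sup_{s}|g(Y_{j\D})(B(s)-B(j\D))|^2$, is the main obstacle: the maximum over $N\approx T/\D$ Brownian increments costs either a logarithm or a power of $\D$. I would first try H\"older with exponents $(1+r/2)$-type conjugates. Write
\begin{align*}
\E\max_j|g(Y_{j\D})|^2\sup_s|\D B_j(s)|^2\le \Big(\E\max_j|g(Y_{j\D})|^{2a}\Big)^{1/a}\Big(\E\max_j\sup_s|\D B_j(s)|^{2b}\Big)^{1/b}.
\end{align*}
Bound the second factor crudely by $(\sum_j\E\sup|\D B_j|^{2b})^{1/b}\le C(N\D^{b})^{1/b}=C\D^{1-1/b}$; taking $b=2$ gives $\D^{1/2}$. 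For the first factor, use \eqref{Eq_B1}, so that $|g|^{4}\le C(1+\|Y_{j\D}\|^{2(2+r)})$, and apply \eqref{Bound_XX242} with $\hat p=2(2+r)$. This is admissible precisely when $2(2+r)(1+(r\ve\hat r)/2)\le p$, which is \eqref{cond42}. This yields the $\D^{1/2}$ contribution.

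If one prefers to exploit the truncation instead, apply \eqref{PP2} together with \eqref{PP}. This gives $|g(Y_{j\D})|^2\le C\D^{-\varrho}(1+\|Y_{j\D}\|^2)$, and bounding $\|Y_{j\D}\|\le C\D^{-\varrho/r}$ only on the part of the path that exceeds the polynomial moments produces the alternative rate $\D^{2\varrho/r}$. Taking the better of the two admissible estimates in each regime yields the exponent $1/2\we 2\varrho/r$.

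Finally, combine the three parts. The bound obtained is uniform in $t\in[0,T]$ because all the estimates use only $\sup_{k\D\le T}$ moments, which gives \eqref{Tmp11_01}.
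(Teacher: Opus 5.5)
\textbf{Gap: the jumps of $Z$ at grid points.} The within-step formula $Z(s)-Z(j\D)=f(Y_{j\D})(s-j\D)+g(Y_{j\D})(B(s)-B(j\D))$ only covers $s\in[j\D,(j+1)\D)$. Both of your other terms contain a grid increment $Y((j+1)\D)-Y(j\D)$:
\begin{itemize}
\item the interpolation term, which for $\theta\in[i\D,(i+1)\D]$ equals $Z(k\D+\theta)-Y((k+i)\D)-\lambda\,[Y((k+i+1)\D)-Y((k+i)\D)]$ with $\lambda\in[0,1]$;
\item the shift term, whenever $k\D+\theta$ and $t+\theta$ lie in adjacent steps.
\end{itemize}
That grid increment is the within-step increment $\hat Y((j+1)\D)-Y(j\D)$ plus the truncation jump $Y((j+1)\D)-\hat Y((j+1)\D)$. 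As written, nothing in your argument controls the jump.

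The paper handles exactly this in its Step 2. It bounds $E_2\le 4\,\E\big[\sup_k|\hat Y(k\D)|^2\II_{\{|\hat Y(k\D)|\ge R(\D)\}}\big]\le C(T)R(\D)^{-2}=C(T)\D^{2\varrho/r}$ by H\"older and Chebyshev with \eqref{Bound_XX24266}. That is the \emph{only} source of the exponent $2\varrho/r$ in \eqref{Tmp11_01}.

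Your last paragraph misattributes it. Inserting \eqref{PP2}--\eqref{PP} into the diffusion term only costs an extra factor $\D^{-\varrho}$ and gives no rate $\D^{2\varrho/r}$. Moreover, ``taking the better of the two estimates'' would give the exponent $\frac12\ve\frac{2\varrho}{r}$, not $\frac12\we\frac{2\varrho}{r}$. That paragraph should be deleted.

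\textbf{The fix.} Your claim that only within-step increments are needed is nonetheless true, for a reason you did not give. Since $|Y(j\D)|\le H^{-1}(c_4\D^{-\varrho})$, we have $\pd(Y(j\D))=Y(j\D)$, so the non-expansiveness in \eqref{PP3} gives, for $j\ge 0$,
\begin{align*}
|Y((j+1)\D)-Y(j\D)|=|\pd(\hat Y((j+1)\D))-\pd(Y(j\D))|\le|\hat Y((j+1)\D)-Y(j\D)|=|f(Y_{j\D})\D+g(Y_{j\D})\D B_{j\D}|.
\end{align*}
For $j<0$ the increment is just $\xi((j+1)\D)-\xi(j\D)$.

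With this line added, your argument is complete. Your drift and diffusion estimates coincide with the paper's Step 1 and Case 3: \eqref{Bound_XX242} with $\hat p=2+2r$ and $\hat p=2(2+r)$ (the latter admissible exactly under \eqref{cond42}), plus $\sum_j\E\sup_s|B(s)-B(j\D)|^4\le C(T)\D$. The result is $C(T)\D^{1/2}$, which implies \eqref{Tmp11_01} because $\D\le 1$.

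\textbf{Comparison with the paper.} Once repaired, your route replaces the paper's $E_1/E_2/E_3$ split and five-case analysis with a single modulus-of-continuity bound. It is also strictly stronger. The same inequality applied to \eqref{tmp11_32} makes the paper's $E_2$ superfluous. The term $\frac{2}{3r}$ in $\hat\gamma$ of Theorem \ref{Th943} enters only through this lemma, so it could then be dropped.
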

\begin{proof}
Fix any $t \in [0,T ]$. 
\eqref{Eqq3.2} can be rewritten as the following 
\begin{align}\label{Z11}
Y_{\underline t}(\th) =  Y(\underline t + \underline \th) + \frac{\th - \underline \th}{\D} 
\left (  Y(\underline t + \underline \th + \D)   - Y(\underline t + \underline \th) \right), \qu \f \th \in [-\tau,0],
\end{align}
where $\underline \cdot $ is defined by  \eqref{Def1}.
Applying the elementary inequality implies 
\begin{align}\label{Z12}
&\E \Big [ \sup_{-\tau \le \th \le 0} |Z_t (\th) - \bar Y_t (\th)|^2 \Big ] 
= \E \Big [ \sup_{-\tau \le \th \le 0} |Z (t+\th) -  Y_{\underline t} (\th)|^2 \Big ] 
 \no \\
& \le 2 \E \Big [ \sup_{-\tau \le \th \le 0} |Z (t+\th) -  Y(\underline t + \underline \th)|^2 \Big ]
+ 2 \E \Big [ \sup_{-\tau \le \th \le 0} | Y(\underline t + \underline \th + \D)   - Y(\underline t + \underline \th) |^2 \Big ]
  \no \\
& \le  
  4 \underbrace{ \E \Big [ \sup_{-\tau \le \th \le 0} |\hat Y(\underline t + \underline \th +\D)- Y(\underline t + \underline \th)|^2 \Big ]}_{=:E_1}
+4 \underbrace{ \E \Big [ \sup_{-\tau \le \th \le 0} | Y(\underline t + \underline \th +\D)- \hat Y(\underline t + \underline \th +\D)|^2 \Big ] }_{=:E_2}
  \no \\
& \qu
+ 2 \underbrace{ \E \Big [ \sup_{-\tau \le \th \le 0} |Z (t + \th) -  Y(\underline t + \underline \th)|^2 \Big ] }_{=:E_3}, \qu  t \in [0,T].
\end{align}
\noindent $\mathbf{Step 1 : Estimate~  E_{1}}$.
We first prove that 
\begin{align}\label{tmp11_3}
E_1 =\E \Big [ \sup_{-\tau \le \th \le 0} |\hat Y(\underline t + \underline \th +\D)- Y(\underline t + \underline \th)|^2 \Big ]\le C(T) \D^{1/2}.
\end{align}
If $\underline t + \underline \th \le -\D  $, then by Assumption \ref{AS3}  and \eqref{Eqq3.2}, 
we have $E_1 \le C \D$.
If $\underline t + \underline \th \ge  0  $, then we conclude from  \eqref{TEM1} that 
\begin{align}\label{tmp11_4}
E_1& = \E \Big [ \sup_{-\tau \le \th \le 0} |f(Y_{\underline t + \underline \th})\D +
g(Y_{\underline t + \underline \th})\D B_{\underline t + \underline \th} 
|^2 \big ] \no \\
& \le 2 \D^2 \E \Big [ \sup_{-\tau \le \th \le 0} |f(Y_{\underline t + \underline \th})|^2 \big ]
+ 2 \E \Big [ \sup_{-\tau \le \th \le 0} |g(Y_{\underline t + \underline \th})\D B_{\underline t + \underline \th} |^2 \big ]. 
\end{align}
Applying \eqref{Eq_B11}  and the Jensen inequality yields
\begin{align}
 |f(\psi)|^2 & \le C \Big ( 1 + |\psi (0)|^{2+ 2r} + \Big (  \frac{1}{\tau}\intt | \psi(\th) |^{1 +r} d \th  \Big )^2\Big ) 
 \le C \Big ( 1 + |\psi (0)|^{2+ 2r} +  \frac{1}{\tau}\intt | \psi(\th) |^{2 +2r} d \th  \Big )\label{temp402-1}, \\
 |g(\psi)|^4 
& \le C \Big ( 1 + |\psi (0)|^{4+ 2r} +  \frac{1}{\tau}\intt | \psi(\th) |^{4 +2r} d \th  \Big ).\label{temp402-2}
\end{align}
Let $N= \lfloor T/\D \rfloor$. Condition \eqref{cond42} implies that  $\displaystyle 2+2r < 4 + 2 r \le \frac{p}{1 + (r \ve \hat r)/2}$. Thus,  
 we conclude from \eqref{temp402-1} and \eqref{Bound_XX242} of Lemma \ref{Lem3.12} that 
\begin{align}\label{tmp11_5}
\E \Big [ \sup_{0 \le k \le N} |f(Y_{k \D})|^2 \big ] 
& \le C \Big ( 1+  \E \Big [ \sup_{0 \le k \le N} |Y(k\D)|^{2+ 2r} \Big ] +
\E \Big [ \sup_{0 \le k \le N}\frac{1}{\tau} \int_{-\tau}^{0 }|Y_{k \D}(\th)|^{2+ 2r}  d \th \Big ] \Big  )  \no \\
& \le C \Big ( 1+  \E \Big [ \sup_{0 \le k \le N} |Y(k\D)|^{2+ 2r} \Big ] +
\E \Big [ \sup_{0 \le k \le N}|Y_{k \D}(\th)|^{2+ 2r}  \Big ] \Big  ) \le C(T).
\end{align}
Similarly, it follows from  condition \eqref{cond42} and \eqref{temp402-2} that 
\begin{align}\label{tmp11_52}
\E \Big [ \sup_{0 \le k \le N} |g(Y_{k \D})|^4 \big ] 
\le C(T).
\end{align}
Consequently, by using
 the \Holder and the Doob maximal inequalities, we can estimate 
\begin{align}\label{tmp11_6}
 \E \Big [ \sup_{-\tau \le \th \le 0} |g(Y_{\underline t + \underline \th})\D B_{\underline t + \underline \th} |^2 \big ] 
& \le \left (\E \Big [ \sup_{-\tau \le \th \le 0} |g(Y_{\underline t + \underline \th})|^4 \Big ] \right )^{1/2}
 \left (\E \Big [ \sup_{-\tau \le \th \le 0} |\D B_{\underline t + \underline \th} |^4 \Big ] \right )^{1/2} \no \\
&\le \frac{16}{9} \left (\E \Big [ \sup_{-m \le k  \le N} |g(Y_{k\D})|^4 \Big ] \right )^{1/2}
 \left (\E \Big [\sum_{i=-m}^{-1}|\D B_{\underline t + i\D} |^4 \Big ] \right )^{1/2} \no \\
 & \le C_{T} \left ( \sum_{i=-m}^{-1}\E |\D B_{\underline t + i\D} |^4  \right )^{1/2} 
 \le C(T) ( m \D^2 ) ^{1/2}
 =  \tau^{1/2} C(T) \D^{1/2}.
\end{align}
Inserting \eqref{tmp11_5} and \eqref{tmp11_6} into  \eqref{tmp11_4} gives  \eqref{tmp11_3}. 
\par
\noindent $\mathbf{Step 2 : Estimate~  E_{2}}$.
We now begin to prove 
\begin{align}\label{tmp18-2}
E_2 = \E \Big [ \sup_{-\tau \le \th \le 0} |\hat Y(\underline t + \underline \th +\D)-  Y(\underline t + \underline \th +\D)|^2 \Big ] \le C(T) \D^{2 \varrho /r}.
\end{align}
 Let $R(\D):= H^{-1}(c_4 \D^{-\varrho} )=K\D^{-\varrho/r} $.
Then we conclude from the definition of truncation that 
\begin{align*}
& |\hat Y(\tk) - Y(\tk)|^2= 
 \Bigg |\hat Y(\tk)  - R(\D) \frac{\hat Y(\tk)}{|\hat Y(\tk)|} \Bigg |^2\II_{ \{|\hat Y(\tk)| \ge R(\D) \} }  \\
& \le   2( |\hat Y(\tk)  |^2 + R(\D)^2 )\II_{ \{|\hat Y(\tk)| \ge R(\D) \} }
 \le 4|\hat Y(\tk)  |^2 \II_{ \{|\hat Y(\tk)| \ge R(\D) \} },  \; k \ge 0.
\end{align*}
 By \eqref{cond42},  the \Holder inequality and \eqref{Bound_XX24266}, we get 
\begin{align}\label{tmp18-1}
& \E \Big [ \sup_{-m \le k \le N} |  \hat Y(\tk) -  Y(\tk)|^2 \Big ] 
 \le 4 \E \Big [ \sup_{-m \le k \le N} |  \hat Y(\tk) |^2 \II_{ \{|\hat Y(\tk)| \ge R(\D) \} }\Big ] \no \\
 & \le 4 \left ( \E \Big [ \sup_{-m \le k \le N} |  \hat Y(\tk) |^4 \Big ]  \right )^{1/2}
  \left (\E \Big [ \sup_{-m \le k \le N}   \II_{ \{|\hat Y(\tk)| \ge R(\D) \} }\Big ]  \right )^{1/2} \no \\
  & \le 4 \left ( \E \Big [ \sup_{-m \le k \le N} |  \hat Y(\tk) |^4 \Big ]  \right )^{1/2}  \left ( \frac{\E \Big [ \sup_{-m \le k \le N} |  \hat Y(\tk) |^4 \Big ]}{R(\D)^4}   \right )^{1/2}  \no \\
 & \le C(T) \frac{1}{R(\D)^2} \le C(T) \D^{2\varrho/r},
\end{align}
which implies that \eqref{tmp18-2} holds.  Moreover, 
by virtue of \eqref{tmp11_3} and \eqref{tmp18-2}, we  conclude from the elementary inequality that 
\begin{align}\label{tmp11_32}
\E \Big [ \sup_{-\tau \le \th \le 0} | Y(\underline t + \underline \th +\D)-  Y(\underline t + \underline \th )|^2 \Big ] \le C(T) \D^{1/2 \we 2 \varrho /r}.
\end{align}
\noindent $\mathbf{Step 3 : Estimate~  E_{3}}$.
We begin to prove that 
\begin{align}\label{Teq_66}
E_3=\E \Big [ \sup_{-\tau \le \th \le 0} |Z (t + \th) -  Y(\underline t + \underline \th)|^2 \Big ]
\le C(T) \D^{1/2 \we 2 \varrho /r}.
\end{align}
Clearly, $ t \in [ \underline t, \underline t + \D) $ and $ \th \in [ \underline \th, \underline \th + \D) $. 
Thus, $ 0 \le t + \th  - (\underline t + \underline \th ) < 2 \D.$ 
Assume that $\underline t = k \D $ and  $\underline \th = i \D$. According to the range of $t + \th$,
we divide the proof of \eqref{Teq_66} into five cases. 
\begin{itemize}
\item [Case 1.] If $t+\th \in [(k+i)\D, (k+i+1)\D) \subset [-\tau,0)$, then  $\underline t +\underline \th = \underline {t + \th} $. From Assumption \ref{AS3}, we get
\begin{align*}
|Z(t+\th) - Y(\underline t + \underline \th )|^2= |\xi(t + \th ) - \xi(\underline {t +\th})|^2 \le C \D,
\end{align*}
which implies that \eqref{Teq_66} holds. 
\item [Case 2.] If $t+\th \in [(k+i+1)\D, (k+i+2)\D) \subset [-\tau,0)$, then  $\underline t +\underline \th  +\D= \underline {t + \th} $. Thus, 
\begin{align*}
&|Z(t+\th) - Y(\underline t + \underline \th )|^2 
 \le 2 |\xi(t+\th) - \xi(\underline{t+\th}) |^2 + 2 |\xi( \underline t+ \underline \th + \D ) - \xi(\underline t + \underline \th) |^2 \le C\D,
\end{align*}
which implies that \eqref{Teq_66} also holds. 
\item [Case 3.] If $t+\th \in [(k+i)\D, (k+i+1)\D) \subset [0, \infty)$, then 
the Doob martingale inequality gives 
\begin{align}
\label{Teq25_1}
&\E \Big [ \sup_{-\tau \le \th \le 0} |B (t + \th) -  B(\underline {t  +\th})|^4 \Big ]
 =\E \Big [ \sup_{t-\tau  \le s \le t} |B (s) -  B(\underline {s})|^4 \Big ] \no \\
& \le \E \Big [ \sup_{(k-m)\D  \le s \le (k+1)\D} |B (s) -  B(\underline {s})|^4 \Big ] 
\le  \sum_{i=k-m}^{k} \E \Big [  \sup_{i\D  \le s \le (i+1)\D} |B (s) -  B(i \D)|^4 \Big ] \no \\
& \le \left (\frac{4}{3}\right )^{4}   \sum_{i=k-m}^{k} \E \Big [  |B ((i+1)\D) -  B(i\D)|^4 \Big ] \le C\D. 
\end{align}
Moreover, 
\begin{align}
\label{Teq25_2}
&Z (t + \th) - Z(\underline {t + \th}) 
=Z(t+\th) - Y(\underline t + \underline \th ) \no \\
&=
f(Y_{(k+i)\D} )\Big (t + \th - (k+i)\D \Big )+g(Y_{(k+i)\D} )\Big ( B(t + \th) - B((k+i)\D) \Big ).
\end{align}
By the \Holder inequality and \eqref{tmp11_5} and \eqref{Teq25_1} 
, we conclude from \eqref{Teq25_2} and \eqref{tmp11_52} that 
\begin{align}
\label{Teq25-4}
& \E \Big [ \sup_{-\tau \le \th \le 0} |Z (t + \th) -  Y(\underline t + \underline \th)|^2 \Big ] \\
& \le 2 \D^2 \E \Big [ \sup_{-m \le i \le N} |f(Y_{(k+i)\D})|^2 \Big ]
+ 2 \E \Big [ \sup_{-\tau \le \th \le 0} |g(Y_{\underline {t  +\th}})
 [B (t + \th) -  B(\underline {t  +\th}) ]|^2 \Big ] \no \\
& \le C(T) \D^2 +   \left (\E \Big [ \sup_{-\tau \le \th \le 0} |g(Y_{\underline {t + \th}})|^4 \Big ] \right )^{1/2}
 \left (\E \Big [ \sup_{-\tau \le \th \le 0} |B (t + \th) -  B(\underline {t  +\th})|^4 \Big ] \right )^{1/2} 
 \le C(T) \D^{1/2}.  \no 
\end{align}
\item [Case 4.] If $t+\th \in [(k+i+1)\D, (k+i+2)\D) \subset [\D, \infty)$, then 
\begin{align*}
Z (t + \th) -  Y(\underline t + \underline \th)
= Z (t + \th)- Z(\underline {t + \th}) + Y(\underline {t} + \underline {\th}+\D)-  Y(\underline t + \underline \th).
\end{align*}
Thus, 
\begin{align*}
& \E \Big [ \sup_{-\tau \le \th \le 0} |Z (t + \th) -  Y(\underline t + \underline \th)|^2 \Big ] \no \\
& \le 2 \E \Big [ \sup_{-\tau \le \th \le 0} |Z (t + \th) -  Z(\underline {t + \th})|^2 \Big ] 
+  2 \E \Big [ \sup_{-\tau \le \th \le 0} |Y(\underline {t} + \underline {\th}+\D)-  Y(\underline t + \underline \th)|^2 \Big ].
\end{align*}
By \eqref{Eq932}, we have 
\begin{align*}
Z (t + \th) -  Y(\underline {t + \th})= f(Y_{\underline {t + \th}} )\Big (t + \th - \underline {t + \th} \Big )+g(Y_{\underline {t + \th}} )\Big ( B(t + \th) - B(\underline {t + \th}) \Big ).
\end{align*}
Following the proof of \eqref{Teq25-4}, we deduce that
\begin{align*}
\E \Big [ \sup_{-\tau \le \th \le 0} |Z (t + \th) -  Z(\underline {t + \th})|^2 \Big ] 
=\E \Big [ \sup_{-\tau \le \th \le 0} |Z (t + \th) -  Y(\underline {t + \th})|^2 \Big ] 
\le C(T) \D^{1 /2}.
\end{align*}
In addition, in light of \eqref{tmp11_32} yields
\begin{align*}
& \E \Big [ \sup_{-\tau \le \th \le 0} |Z (t + \th) -  Y(\underline t + \underline \th)|^2 \Big ] 
\le C(T) \D^{ 1/2 \we 2 \varrho /r}. 
\end{align*}
\item [Case 5.] If $t+\th \in [(k+i+1)\D, (k+i+2)\D) \subset [0, \D)$, then 
\begin{align*}
Z (t + \th) -  Y(\underline t + \underline \th)
= Z (t + \th) - Z(0) + \xi (0) - \xi(-\D).
\end{align*}
Thus, 
\begin{align*}
& \E \Big [ \sup_{-\tau \le \th \le 0} |Z (t + \th) -  Y(\underline t + \underline \th)|^2 \Big ]  \\
&  \le 2 \E \Big [ \sup_{-\tau \le \th \le 0} |f(\xi)(t+\th) + g (\xi) B(t+\th)|^2 \Big ]
+ 2 | \xi(0) - \xi(-\D)|^2 
\le C \D. 
\end{align*}
\end{itemize}
Combining the above five cases together, we prove that \eqref{Teq_66} holds. 
Thus, the proof is completed.
\end{proof}
We now investigate the $L^2$-error between the  auxiliary process $Z(t)$ and the exact solution $X(t)$.
\begin{theorem}\label{Th942}

 Let  Assumptions \ref{AS2_Kha}, \ref{AS5_POLYNO} and \ref{AS3} hold
with 
\begin{align}\label{cond726}
\displaystyle 2 + 4 r  \le (p-r) \we  \frac{p}{1 + (r \ve \hat r)/2}.
\end{align}
Let
$\displaystyle \hat p = ( p-r) \we  \frac{p}{1 + (r \ve \hat r)/2} $ and 
$R(\D)= H^{-1}(c_4 \D^{-\varrho} )=C\D^{-\frac{1}{3r}}.$
Then for any $\D \in (0,1]$, the truncated process $Z(t)$ defined by \eqref{Eq932} 
has the property that
\begin{align}
\E \left [ \sup_{0 \le t \le T}|X(t) - Z(t)|^{2} \right ]&\le C(T) 
\D^{\frac{1}{2}  \we ( \frac{p}{3r} - \frac{1}{\hat p -2}  )}, \qu  \label{Con_rate}  \\
\E \left [ \sup_{0 \le t \le T}|X(t) - Y( \underline t ) |^{2} \right ]&\le C(T) 
\D^{\frac{1}{2}  \we ( \frac{p}{3r} - \frac{1}{\hat p -2}  )}, \label{Con_rate22}
\end{align}
where the constant $C(T)$ depends on $T$ but is independent of $\D$.
\end{theorem}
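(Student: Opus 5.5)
The plan is the standard stopping-time argument of Higham--Mao--Stuart type, as used in \cite{Li_Mao2024spa}, but with the supremum placed inside the expectation. Let $e(t)=X(t)-Z(t)$ and fix $R>\|\xi\|$, to be tied to $\D$ later. Put $\theta_R=\tau_R\we\rho_{R,\D}$, with $\tau_R$ from \eqref{Eq9434} and $\rho_{R,\D}$ from \eqref{Eq94322}. We split
\begin{align*}
\E\Big[\sup_{0\le t\le T}|e(t)|^2\Big]\le \E\Big[\sup_{0\le t\le T}|e(t\we\theta_R)|^2\Big]+\E\Big[\sup_{0\le t\le T}|e(t)|^2\II_{\{\theta_R\le T\}}\Big].
\end{align*}
We bound the second term with Young's inequality $ab\le \frac{\delta}{2}a^{\hat p/2}\cdot\frac{2}{\hat p}+\cdots$. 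We use the uniform sup-moment bounds \eqref{Bound_XX3} and \eqref{Bound_XX2422}; both are available for exponent $\hat p$, since $\hat p\le (p-r)\we \frac{p}{1+(r\ve\hat r)/2}$. Combined with \eqref{Eq95} and \eqref{Eq952}, this gives a bound
\begin{align*}
C\delta+C\delta^{-2/(\hat p-2)}R^{-p}.
\end{align*}

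For the first term, on $[0,\theta_R]$ every segment has norm at most $R$. Choose $R=R(\D)$, which equals $H^{-1}(c_4\D^{-\varrho})$ with $\varrho=1/3$, so $R=C\D^{-1/(3r)}$. Then the truncation is inactive for the exact solution, and for $s<\theta_R$ it is also inactive on the numerical side, so that $f(\bar Y_s)$ and $g(\bar Y_s)$ are genuine coefficient evaluations. Using \eqref{Eq9444}, It\^o's formula for $|e(t\we\theta_R)|^2$ gives
\begin{align*}
2\lan e,f(X_s)-f(\bar Y_s)\ran+|g(X_s)-g(\bar Y_s)|^2 .
\end{align*}
Insert $Z_s$ and write $f(X_s)-f(\bar Y_s)=[f(X_s)-f(Z_s)]+[f(Z_s)-f(\bar Y_s)]$, and likewise for $g$. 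The first brackets are handled by Assumption \ref{AS2_Kha}. The negative $\a_1$ term dominates the delayed $\a_2$ term after the Fubini shift used in Lemma \ref{lem2.11}, because $\a_1>\a_2$. The remainder is $\a_0$ times $|e(s)|^2$ plus the averaged segment norm, and the latter is bounded by $\sup_{u\le s}|e(u)|^2$ since $e\equiv0$ on $[-\tau,0]$. The second brackets are controlled by Assumption \ref{AS5_POLYNO}, since the polynomial weights are bounded in $L^q$ by Lemma \ref{Lem3.12}. By H\"older's inequality they reduce to $\E|Z_s(\th)-\bar Y_s(\th)|^{\hat p}$, which is $O(\D^{\hat p/2})$ by \eqref{Tem103}; this contributes $O(\D)$ after taking $2/\hat p$ powers. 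The stochastic integral is handled with BDG, $\frac12\sup|e|^2$ absorption, and the same Lipschitz-type estimates. Gronwall then gives $C(T)\D$ for the stopped sup.

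For the rate, take $\delta=\D^{1/2}$, giving the $\D^{1/2}$ term. Choosing $\delta$ optimally against $R^{-p}=\D^{p/(3r)}$ with exponent matching gives $\delta^{-2/(\hat p-2)}R^{-p}=\D^{p/(3r)-1/(\hat p-2)}$, up to the form stated. Hence we get $\D^{\frac12\we(\frac{p}{3r}-\frac1{\hat p-2})}$, which is \eqref{Con_rate}. Condition \eqref{cond726} ensures $\hat p\ge 2+4r$, so the exponents are positive and the polynomial weights of degree up to about $2r$ are integrable.

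For \eqref{Con_rate22}, write $X(t)-Y(\underline t)=e(t)+Z(t)-Z(\underline t)$. It then suffices to bound $\E\sup_{t\le T}|Z(t)-Y(\underline t)|^2$. This is exactly the estimate in Case 3 of Lemma \ref{Lem904_22}, which gives $C(T)\D^{1/2}$ via \eqref{tmp11_5}, \eqref{tmp11_52} and the Doob bound \eqref{Teq25_1}.

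\textbf{Main obstacle.} The main obstacle is the stopped estimate with the supremum inside. The delay term $\frac{\a_2}{\tau}\int|e|^2(|X|^{\hat r}+|Z|^{\hat r})$ must be absorbed by $-\a_1\int|e(s)|^2(\cdots)$, and this has to be done pathwise before taking expectations. The BDG step then requires the diffusion difference to be bounded without the monotone structure. I would first try to perform the absorption inside the It\^o integral identity, prior to BDG. Only then would I apply BDG, bounding $|g(X_s)-g(Z_s)|^2$ by Assumption \ref{AS5_POLYNO} with weights at most $R^r$ before stopping. If that proves too lossy, I would instead combine the pointwise bound \eqref{G99_10_1} with BDG.
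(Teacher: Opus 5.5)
Your proposal is correct and follows the paper's proof essentially step by step: the stopping time $\tau_{R(\D)}\we\rho_{R(\D),\D}$ with $\varrho=1/3$, It\^o's formula via \eqref{Eq9444}, Assumption \ref{AS2_Kha} with the pathwise Fubini absorption of the $\a_2$ term, Assumption \ref{AS5_POLYNO} plus H\"older and \eqref{Tem103} for the $Z_s-\bar Y_s$ terms, BDG and Gronwall giving $C(T)\D$, the Young split with $\delta=\D^{1/2}$ together with \eqref{Eq95} and \eqref{Eq952}, and the H\"older--Doob bound \eqref{eq419_3} for \eqref{Con_rate22}. Of your two options for the BDG term only the fallback works, because weights of size $R^r=C\D^{-1/3}$ would put $\D^{-1/3}$ into the Gronwall exponent; the fallback is exactly what the paper does, bounding $\int_0^T\E|g(X_s)-g(\bar Y_s)|^2ds\le C(T)\D$ directly by H\"older with the pointwise estimates \eqref{G99_10} and \eqref{Za12}.
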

\begin{proof}
Let $\nu_{R, \D} := \tau_R \wedge \rho_{R,\D}$, with $\tau_R$ and $\rho_{R,\D}$ defined in \eqref{Eq9434} and \eqref{Eq94322}, respectively.
Let  
$R(\D)= H^{-1}(c_4 \D^{-\varrho} )=C\D^{-\varrho/r} $ and  
$\nu_\D: =\tau_{R(\D)} \we \rho_{R(\D),\D} $.
We first show that 
\begin{align}\label{G96}
\E \left [ \sup_{0 \le t \le T } |X(t \we \nu_\D) - Z(t \we \nu_\D)|^2 \right ] \le  C(T) \D^{} ,
\end{align}
By  
\eqref{Eq0} and  \eqref{Eq9444}, we have 
\begin{align*}
X(t \we \nu_\D)-Z(t \we \nu_\D)=\int_{0}^{t \we \nu_\D} \Big ( f(X_s) - f(\bar Y_s) \Big ) ds
+ \int_{0}^{t \we \nu_\D} \Big ( g(X_s) - g(\bar Y_s) \Big ) dB(s), \qu \f t \in [0, T].
\end{align*}
The \Ito formula gives
\begin{align*}
| X(t \we \nu_\D)-Z(t \we \nu_\D)|^2 & = \int_{0}^{t \we \nu_\D} \Big (
2 \lan X(s) - Z(s), f(X_s) - f(\bar Y_s) \ran + |g(X_s) - g(\bar Y_s)|^2 
 \Big ) ds + \hat M(t\we \nu_\D), 
\end{align*}
where 
\begin{align*}
\hat M(t) = 2\int_{0}^{t}
(X(s) - Z(s))^T (g(X_s) - g(\bar Y_s))   dB(s) .
\end{align*}
From Assumption \ref{AS2_Kha}, and noting that $ 2 ab \le a^2 + b^2$, we obtain
\begin{align}\label{G32}
& | X(t \we \nu_\D)-Z(t \we \nu_\D)|^2  \\  
& \le \int_{0}^{t \we \nu_\D} \Big (
2 \lan X(s) - Z(s), f(X_s) - f(Z_s) \ran + 2|g(X_s) - g(Z_s)|^2  \Big ) ds  \no \\
& \qu + \int_{0}^{t \we \nu_\D} \Big (
2 \lan X(s) - Z(s), f(Z_s) - f(\bar Y_s) \ran + 2|g(Z_s) - g(\bar Y_s)|^2  \Big ) ds + \hat M(t \we \nu_\D) \no \\
& \le \int_{0}^{t \we \nu_\D} \Big (
2 \lan X(s) - Z(s), f(X_s) - f(Z_s) \ran + 2|g(X_s) - g(Z_s)|^2  \Big ) ds  \no \\
& \qu +  \int_{0}^{t \we \nu_\D}
|X(s) - Z(s)|^2 ds 
 +   2 \int_{0}^{t \we \nu_\D} \Big ( |f(Z_s) - f(\bar Y_s)|^2+|g(Z_s) - g(\bar Y_s)|^2  \Big ) ds
+ \hat M(t \we \nu_\D) \no \\
&  \le -\a_1  \int_{0}^{t \we \nu_\D} 
|X(s) - Z(s)|^2 ( |X(s)|^{\hat r} + |Z(s)|^{\hat r}) ds  \no \\
& \qu +
\a_2 \int_{0}^{t \we \nu_\D} \Big (  \frac{1}{\tau} \intt |X_s( \th) - Z_s(\th)|^2 ( |X_s(\th)|^{\hat r} + |Z_s(\th)|^{\hat r} )    d\th  \Big )
ds \no \\
& \qu + (1+\a_0) \int_{0}^{t \we \nu_\D} |X(s) - Z(s)|^2 ds +
\a_0 \int_{0}^{t \we \nu_\D} \Big (  \frac{1}{\tau} \intt |X_s( \th) - Z_s(\th)|^2 d\th  \Big ) d s
 \no \\
& \qu  +   2 \int_{0}^{t \we \nu_\D} \Big ( |f(Z_s) - f(\bar Y_s)|^2+|g(Z_s) - g(\bar Y_s)|^2  \Big ) ds
+ \hat M(t \we \nu_\D) . \no
\end{align}
It follows from Assumptions \ref{AS3} and \eqref{Eq932} that
\begin{align}\label{G98}
& \frac{1}{\tau} \int_{0}^{t \we \nu_\D} \Big ( \intt | X(s+\th) - Z(s+\th)|^2
( 1 + |X(s+\th)|^{\hat r} + |Z(s+\th)|^{\hat r} ) 
 d \th \Big ) ds \no \\
& = \frac{1}{\tau}  \intt \Big (\int_{0}^{t \we \nu_\D}  
| X(s+\th) - Z(s+\th)|^2 ( 1 + |X(s+\th)|^{\hat r} + |Z(s+\th)|^{\hat r} )  ds  \Big ) d \th \no \\
&  \le \frac{1}{\tau}  \intt \Big (\int_{-\tau}^{t \we \nu_\D}  
| X(u) - Z(u)|^2( 1 + |X(u)|^{\hat r} + |Z(u)|^{\hat r} ) du  \Big ) d \th \no \\
&
=  \int_{0}^{t \we \nu_\D}  | X(u) - Z(u)|^2 ( 1 + |X(u)|^{\hat r} + |Z(u)|^{\hat r} )du .
\end{align}
Inserting \eqref{G98} into \eqref{G32} and using $ \a_1 > \a_2$, we have 
\begin{align*}
& | X(t \we \nu_\D)-Z(t \we \nu_\D)|^2  \\  
& \le (1+ 2 \a_0)  \int_{0}^{t \we \nu_\D}
|X(s) - Z(s)|^2 ds 
 +   2 \int_{0}^{t \we \nu_\D} \Big ( |f(Z_s) - f(\bar Y_s)|^2+|g(Z_s) - g(\bar Y_s)|^2  \Big ) ds
+ \hat M(t \we \nu_\D) \\
& \le  (1+ 2 \a_0)   \int_{0}^{t \we \nu_\D}
|X(s) - Z(s)|^2 ds +
2 \int_{0}^{t } \Big ( |f(Z_s) - f(\bar Y_s)|^2+|g(Z_s) - g(\bar Y_s)|^2  \Big )\II_{ \{s \le \nu_\D \}} ds+ \hat M(t \we \nu_\D) \\
& \le (1+ 2 \a_0)    \int_{0}^{t \we \nu_\D}
|X(s) - Z(s)|^2 ds +
C \int_{0}^{t } |Z_s(0) - \bar Y_s (0)|^2 ( 1+ |Z_s(0)|^{2r} +|\bar Y_s (0)|^{2r} )  \II_{ \{s \le \nu_\D \}} ds \\
& \qu + C \int_{0}^{t }\Big ( \frac{1}{\tau}\intt  |Z_s(\th) - \bar Y_s (\th)|^2 ( 1+ |Z_s(\th)|^{2r} +|\bar Y_s (\th)|^{2r} )  \II_{ \{s \le \nu_\D \}}  \Big )ds + \hat M(t \we \nu_\D) \\
& \le  (1+ 2 \a_0)   \int_{0}^{t \we \nu_\D}
|X(s) - Z(s)|^2 ds +
C \int_{0}^{T} |Z_s(0) - \bar Y_s (0)|^2 ( 1+ |Z_s(0)|^{2r} +|\bar Y_s (0)|^{2r} )   ds \\
& \qu + C \int_{0}^{T}\Big ( \frac{1}{\tau}\intt  |Z_s(\th) - \bar Y_s (\th)|^2 ( 1+ |Z_s(\th)|^{2r} +|\bar Y_s (\th)|^{2r} ) d\th   \Big )ds + \hat M(t \we \nu_\D),
\end{align*}
where Assumption \ref{AS5_POLYNO} has been used. 
Thus, 
\begin{align}\label{G41}
& \E \left [ \sup_{0 \le u \le t \we \nu_\D} |X(u) - Z(u)|^2 \right ]  \no \\
& \le  C \E \left [\int_{0}^{t \we \nu_\D} |X(s) - Z(s)|^2 ds  \right ]+
C \int_{0}^{T} \E \left [ |Z_s(0) - \bar Y_s (0)|^2 ( 1+ |Z_s(0)|^{2r} +|\bar Y_s (0)|^{2r} )  \right ] ds
 \no \\
& \qu
+ C \int_{0}^{T} \E \Big [ \frac{1}{\tau}\intt  |Z_s(\th) - \bar Y_s (\th)|^2 ( 1+ |Z_s(\th)|^{2r} +|\bar Y_s (\th)|^{2r} )  d \th  \Big ]ds
 + \E \left [ \sup_{0 \le u \le t \we \nu_\D} \hat M(u) \right ]   .
\end{align}
Let 
  $ 2 + 4 r \le p < q$,  then $\displaystyle \frac{2p}{p-2r} \le \frac{p}{1+r/2}$.
By the \Holder inequality and the Lyapunov inequality as well as \eqref{Tem103}, we have 
\begin{align}\label{G9333_10}
&\E \Big  [ \frac{1}{\tau}\intt |Z_s(\th)- \bar Y_s(\th)|^2 ( 1 + |Z_s(\th)|^{2r} + |\bar Y_s(\th)|^{2r})   d \th \Big ] \no \\
& =    \frac{1}{\tau}\intt \E  \Big  [|Z_s(\th)- \bar Y_s(\th)|^2 ( 1 + |Z_s(\th)|^{2r} + |\bar Y_s(\th)|^{2r})   \Big ] d \th    \no \\
& \le C \frac{1}{\tau}\intt \left ( \E  |Z_s(\th)- \bar Y_s(\th) |^{\frac{2p}{p-2r}} \right )^{\frac{p-2r}{p}}
 \left (\E \big (1 + |Z_s( \th)|^{p} + | \bar Y_s( \th)|^{p}\big ) \right )^{\frac{2r}{p}}   d \th \no \\
 & \le C \frac{1}{\tau}\intt \left (\sup_{0 \le s \le T} \sup_{-\tau \le \th \le 0}  \E  |Z_s(\th)-  \bar Y_s(\th) |^{\frac{2p}{p-2r}} \right )^{\frac{p-2r}{p}}
 \left (1 + \sup_{- \tau \le s \le T} \E |Z(s)|^{p} + \sup_{- \tau \le s \le T} \E| \bar Y(s)|^{p} \right )^{\frac{2r}{p}}   d \th  \no \\
 & \le C(T) \D^{},
\end{align}
where \eqref{Bound_Xp312} and  \eqref{Bound_Xp312_29} have been used. 
Similarly, we also can show that 
\begin{align}\label{Temp108}
\E \left [ |Z_s(0) - \bar Y_s (0)|^2 ( 1+ |Z_s(0)|^{2r} +|\bar Y_s (0)|^{2r} )  \right ] \le C(T) \D^{}, \qu 0 \le s \le T.
\end{align}
Inserting \eqref{G9333_10} and \eqref{Temp108} into \eqref{G41} yields 
\begin{align}\label{G43}
& \E \left [ \sup_{0 \le u \le t \we \nu_\D} |X(u) - Z(u)|^2 \right ]  
 \le C \int_{0}^{t} \E \left [ \sup_{0 \le u \le s \we \nu_\D} |X(u) - Z(u)|^2 \right ] ds+ \E \left [ \sup_{0 \le u \le t \we \nu_\D} \hat M(u) \right ] +  C(T) \D .
\end{align}
It then follows from the \BDG inequality and the elementary inequality that
\begin{align}\label{G45}
 \E \left [ \sup_{0 \le u \le t \we \nu_\D} \hat M(u) \right ] 
& \le  8 \sqrt{2}  \E \left [ \int_{0}^{t \we \nu_\D}
| X(s) - Z(s)|^2 | g(X_s) - g(\bar Y_s)|^2 ds \right ]^{1/2} \no \\
& \le  8 \sqrt{2}  \E \left [ \sup_{0 \le u \le t \we \nu_\D } | X(u) - Z(u)|^2
\int_{0}^{t \we \nu_\D}| g(X_s) - g(\bar Y_s)|^2 ds  
\right ]^{1/2} \no \\
& \le \frac{1}{2} \E \left [ \sup_{0 \le u \le t \we \nu_\D}  | X(u) - Z(u)|^2 \right ]
+ C \E  \int_{0}^{t \we \nu_\D}| g(X_s) - g(\bar Y_s)|^2 ds  \no \\
& \le \frac{1}{2} \E \left [ \sup_{0 \le u \le t \we \nu_\D}  | X(u) - Z(u)|^2 \right ]
+ C \int_{0}^{T} \E | g(X_s) - g(\bar Y_s)|^2 ds.
\end{align}
Inserting \eqref{G45} into \eqref{G43} gives that 
\begin{align}\label{G4443}
& \E \left [ \sup_{0 \le u \le t \we \nu_\D} |X(u) - Z(u)|^2 \right ]   \\
 & \le C \int_{0}^{t} \E \left [ \sup_{0 \le u \le s \we \nu_\D} |X(u) - Z(u)|^2 \right ] ds+  C \int_{0}^{T} \E | g(X_s) - g(\bar Y_s)|^2 ds +  C(T) \D . \no
\end{align}
 Again by Assumption \ref{AS5_POLYNO} and \eqref{G94}, we arrive at 
\begin{align}\label{G91}
 \E | g(X_s) - g(\bar Y_s)|^2 
& \le  C \E \Big  [  |X(s) - Y(\underline s) |^2( 1 + |X(s)|^{r} + |Y(\underline s)|^r ) \Big ] \no \\
& \qu + C \E \Big  [ \frac{1}{\tau}\intt |X_s(\th)- \bar Y_s(\th)|^2 ( 1 + |X_s(\th)|^r + |\bar Y_s(\th)|^r)   d \th \Big ].  
\end{align}
Moreover, 
  $ 2 + 4 r \le p < q$ implies that  $\displaystyle \frac{2p}{p-r} \le \frac{p}{1+3r/2}.$
By the \Holder inequality,  \eqref{Bound_Xp} of Lemma
 \ref{lem2.11} and  \eqref{Bound_Xp312} of Lemma  \ref{Lem3.12} as well as \eqref{G99_10} of Lemma \ref{Lem904}, we have 
\begin{align}\label{G92}
&\E \Big  [  |X(s) - Y(\underline s) |^2 \big (1+ |X(s)|^{r} + |Y(\underline s)|^r \big )\Big ] \no \\
& \le \left ( \E  |X(s) - Y(\underline s) |^{\frac{2p}{p-r}} \right )^{\frac{p-r}{p}}
 \left (\E \big (1 + |X(s)|^{p} + |Y(\underline s)|^{p}\big ) \right )^{\frac{r}{p}} 
\le C(T)\D.
\end{align}
By the \Holder  and the Lyapunov inequalities as well as \eqref{Za12}, we have 
\begin{align}\label{G9333}
&\E \Big  [ \frac{1}{\tau}\intt |X_s(\th)- \bar Y_s(\th)|^2 ( 1 + |X_s(\th)|^r + |\bar Y_s(\th)|^r)   d \th \Big ] \no \\
& =    \frac{1}{\tau}\intt \E  \Big  [|X_s(\th)- \bar Y_s(\th)|^2 ( 1 + |X_s(\th)|^r + |\bar Y_s(\th)|^r)   \Big ] d \th    \no \\
& \le C \frac{1}{\tau}\intt \left ( \E  |X_s(\th)- \bar Y_s(\th) |^{\frac{2p}{p-r}} \right )^{\frac{p-r}{p}}
 \left (\E \big (1 + |X_s( \th)|^{p} + | \bar Y_s( \th)|^{p}\big ) \right )^{\frac{r}{p}}   d \th \no \\
 & \le C \frac{1}{\tau}\intt \left (\sup_{0 \le s \le T} \sup_{-\tau \le \th \le 0}  \E  |X_s(\th)-  \bar Y_s(\th) |^{\frac{2p}{p-r}} \right )^{\frac{p-r}{p}}
 \left (1 + \sup_{- \tau \le s \le T} \E |X(s)|^{p} + \sup_{- \tau \le s \le T} \E| \bar Y(s)|^{p} \right )^{\frac{r}{p}}   d \th  \no \\
 & \le C(T) \D^{},
\end{align}
where \eqref{Bound_Xp} and  \eqref{Bound_Xp312} have been used. 
Thus, inserting \eqref{G92}, \eqref{G9333} into \eqref{G91} gives that 
\begin{align}\label{G9991}
 \E | g(X_s) - g(\bar Y_s)|^2 
 & \le  C(T) \D .
\end{align}
Inserting \eqref{G9991} into \eqref{G4443} gives 
\begin{align}\label{G95}
& \E \left [ \sup_{0 \le u \le t \we \nu_\D} |X(u) - Z(u)|^2 \right ] 
 \le C \int_{0}^{t} \E \left [ \sup_{0 \le u \le s \we \nu_\D} |X(u) - Z(u)|^2 \right ] ds+  C(T) \D^{}  .
\end{align}
Then applying the Gronwall inequality to the above \eqref{G95}, we get the desired assertion \eqref{G96}.
\par
Set $\displaystyle \hat p =( p-r) \we  \frac{p}{1 + (r \ve \hat r)/2} $.
For any $\delta >0$,
by the Young inequality and \eqref{Bound_XX3} as well as \eqref{Bound_XX2422}, we have 
\begin{align}\label{eq912_1}
& \E \left [ \sup_{ 0 \le t \le T} |X(t) - Z(t)|^2 \right ] \no \\
& \le \E \left [  \sup_{ 0 \le t \le T} |X(t) - Z(t)|^2 \II_{\{\tau_R >T \; \textrm{and} \; \rho_{R,\D} >T \} }     \right ]
+ \frac{2 \delta }{\hat p } \E \left [ \sup_{ 0 \le t \le T} |X(t) - Z(t)|^2 \right ] \no \\
& \qu + \frac{\hat p-2}{\hat p \delta^{2/(\hat p-2)}}\mathbb{P}(  \tau_R \le T \; \textrm{or} \; \rho_{R,\D} \le T  ) \no \\
& \le \E \left [  \sup_{ 0 \le t \le T} |X (t \we \nu_{\D}) -Z (t \we \nu_{\D})  |^2   \right ]
+ 
\frac{2 C}{ \hat p}\delta + 
 \frac{\hat p-2}{ \hat p \delta^{2/(\hat p-2)}}\mathbb{P}(  \tau_R \le T \; \textrm{or} \; \rho_{R,\D}\le T  ).
\end{align}
Set $\d =\D^{1/2}$ and  $\varrho= \frac{1}{3}$ , then  by 
 \eqref{Eq95} and \eqref{Eq952}, we have
\begin{align}\label{uu2}
 \frac{\hat p-2}{ \hat p \delta^{2/(\hat p-2)}}\mathbb{P}(  \tau_R \le T \; \textrm{or} \; \rho_{R,\D}\le T  )
\le C(T)\D^{\frac{ p}{3r} - \frac{1}{\hat p -2}} .
\end{align}
Inserting  \eqref{G96} and \eqref{uu2} into   \eqref{eq912_1} gives that 
\begin{align*}
 \E \left [ \sup_{ 0 \le t \le T} |X(t) - Z(t)|^2 \right ] 
& \le C(T) \D^{1/2} + C(T) \D + 
C(T)\D^{\frac{ p}{3r} - \frac{2}{\hat p -r}} 
 \le  C(T) \D^{\frac{1}{2} \we \left ( \frac{ p}{3r} - \frac{1}{\hat p -2} \right )} ,
\end{align*}
which is  
\eqref{Con_rate}. 
\par
  We now prove \eqref{Con_rate22}.
By \eqref{tmp11_5} and   \eqref{tmp11_52}, we have
\begin{align}\label{tmp419_1}
\E \Big [ \sup_{0 \le t \le T} |f(Y_{ \underline t})|^2 \big ]  \ve 
\E \Big [ \sup_{0 \le t \le T} |g(Y_{ \underline t})|^4 \big ] 
\le C(T).
\end{align}
Thus, by   \eqref{Eq932} and \eqref{tmp419_1}, we have 
\begin{align}\label{eq419_3}
\E \left [ \sup_{0 \le t \le T}|Z(t) - Y( \underline t ) |^{2} \right ]
& = \E \Big [ \sup_{0 \le t \le T} |f(Y_{\underline t })( t - \underline t) +
g(Y_{\underline t})( B(t) - B{(\underline t)}) 
|^2 \big ] \no \\
& \le 2 \D^2 \E \Big [ \sup_{0 \le t \le T} |f(Y_{\underline t })|^2 \big ]
+ 2 \E \Big [ \sup_{0 \le t \le T} |g(Y_{\underline t })( B(t) - B{(\underline t)})  |^2 \big ] \no \\  
& \le C(T)\D^2 + \left (\E \Big [ \sup_{0 \le t \le T} |g(Y_{\underline t })|^4 \Big ] \right )^{1/2}
 \left (\E \Big [ \sup_{0 \le t \le T} |(B(t) - B({\underline t})  |^4 \Big ] \right )^{1/2} \no \\& \le C(T)\D^2 +C(T) \D^{1/2} \le C(T) \D^{1/2}. 
\end{align}
By \eqref{Con_rate}  and \eqref{eq419_3}, we get decried  \eqref{Con_rate22}.
Thus, the proof is finished. 
\end{proof}  
Theorem \ref{Th943} establishes the strong $L^2$-convergence order of the segment sequence generated by the truncated EM method.
\begin{theorem}\label{Th943}
 Let  Assumptions \ref{AS2_Kha}, \ref{AS5_POLYNO} and \ref{AS3} hold
with 
\begin{align}\label{rem2}
\big (2+2r \big )\big (2 + (r \ve \hat r)\big) + r \le p <q.
    \end{align}  
Let 
 $\displaystyle \hat p =  ( p-r) \we  \frac{p}{1 + (r \ve \hat r)/2} $ and  $\displaystyle\hat \g := \frac{1}{2} \we \frac{2}{3r} \we ( \frac{p}{3r} - \frac{1}{\hat p -2}  )$ .
Then for any  $\D \in (0,1]$, the truncated EM scheme \eqref{TEM1} by setting 
$R(\D)=C\D^{-\frac{1}{3r}}$
has the property that 
\begin{align}\label{Con_rate2}
 \E\|X_T - \bar Y_T \|^{2} 
\le C(T)
 \D^{\hat \g },
\end{align}
where  
 the constant $C(T)$ depends on $T$ but is independent of $\D$.
\end{theorem}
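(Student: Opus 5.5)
The plan is to split the segment error by the triangle inequality through the auxiliary continuous process $Z$:
\begin{align*}
\E\|X_T-\bar Y_T\|^2\le 2\,\E\Big[\sup_{-\tau\le\th\le0}|X(T+\th)-Z(T+\th)|^2\Big]+2\,\E\Big[\sup_{-\tau\le\th\le0}|Z_T(\th)-\bar Y_T(\th)|^2\Big].
\end{align*}
We treat the two terms separately, using Theorem \ref{Th942} for the first and Lemma \ref{Lem904_22} for the second. Throughout, the truncation exponent is $\varrho=1/3$, so that $R(\D)=C\D^{-1/(3r)}$.

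\textbf{First term.} For $T+\th\in[-\tau,0]$ we have $X(T+\th)=Z(T+\th)=\xi(T+\th)$, so the difference vanishes there. The first term is therefore bounded by
\begin{align*}
2\,\E\Big[\sup_{0\le t\le T}|X(t)-Z(t)|^2\Big]\le C(T)\D^{\frac12\we(\frac{p}{3r}-\frac{1}{\hat p-2})},
\end{align*}
by \eqref{Con_rate} of Theorem \ref{Th942}. Before applying it we must check that condition \eqref{rem2} implies \eqref{cond726}, i.e.\ $2+4r\le (p-r)\we \frac{p}{1+(r\ve\hat r)/2}$. Since $p\ge (2+2r)(2+(r\ve\hat r))+r$, we get
\[
\frac{p}{1+(r\ve\hat r)/2}\ge 2(2+2r)\ge 2+4r
\]
and
\[
p-r\ge(2+2r)\cdot 2\ge 2+4r.
\]

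\textbf{Second term.} This is exactly the quantity controlled by Lemma \ref{Lem904_22} at $t=T$, giving the bound $C(T)\D^{1/2\we 2\varrho/r}=C(T)\D^{1/2\we 2/(3r)}$. We need condition \eqref{cond42}, namely $(2+r)(2+(r\ve\hat r))\le p<q$, and this follows immediately from \eqref{rem2}. Adding the two bounds gives an exponent equal to the minimum of $\frac12$, $\frac{2}{3r}$ and $\frac{p}{3r}-\frac{1}{\hat p-2}$, which is $\hat\g$ and yields \eqref{Con_rate2}.

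\textbf{Main obstacle.} The hard part is making sure the moment requirements hidden in the cited results are all met by \eqref{rem2} with the specific choice $\varrho=1/3$. Lemma \ref{Lem904_22} uses fourth moments of $\sup_k|g(Y_{k\D})|$, which requires $4+2r\le p/(1+(r\ve\hat r)/2)$. Theorem \ref{Th942} uses \eqref{G99_10} with exponent $2p/(p-r)$. The extra factor $2+2r$ (in place of $2+r$) and the added $+r$ in \eqref{rem2} appear to be exactly what is needed to cover both requirements at once, including $2+3r<q$ for Lemma \ref{Lem904}. If the bound $\hat p>2$ needed for the exponent in \eqref{Con_rate} to be meaningful is not automatic, it too follows from $\hat p\ge 2+4r>2$. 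With these checks done, the argument reduces to the two-line decomposition above.
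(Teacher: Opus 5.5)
Your proposal is correct and follows the paper's proof essentially verbatim: you split the segment error through $Z$, bound the first term by \eqref{Con_rate} of Theorem \ref{Th942} (using $X=Z=\xi$ on $[-\tau,0]$), and bound the second term by Lemma \ref{Lem904_22} with $\varrho=1/3$. Your explicit check that \eqref{rem2} implies \eqref{cond42} and \eqref{cond726} spells out what the paper simply asserts as clear.
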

\begin{proof}
Clearly, condition \eqref{rem2} implies \eqref{cond42} and \eqref{cond726}. Thus,  
we conclude from
  Lemma \ref{Lem904_22} and Theorem \ref{Th942}  that
%
\begin{align*}
& \E\|X_T - \bar Y_T \|^{2} 
\le  2 \E \left [ \sup_{-\tau \le \th \le 0} |X(T+\th) - Z_T(\th)|^2 \right ]
+ 2 \E \left [ \sup_{-\tau \le \th \le 0} | Z_T(\th) - \bar Y_T (\th)  |^2 \right ]
   \\
& \le 2 \E \left [ \sup_{0 \le t \le T} |X(t) - Z(t)|^2 \right ]
+ 2 \E \left [ \sup_{-\tau \le \th \le 0} | Z_T(\th) - \bar Y_T (\th)  |^2 \right ] \\
&  \le C(T) \D^{\frac{1}{2} \we \left ( \frac{ p}{3r} - \frac{1}{\hat p -2} \right )}
+ C(T)  \D^{1/2 \we 2 \varrho /r} 
\le C(T) \D^{\frac{1}{2} \we \frac{2}{3r} \we ( \frac{p}{3r} - \frac{1}{\hat p -2}  )} ,
\end{align*}
which completes the proof.
\end{proof}
\begin{remark}\label{rem1}
For certain specified parameters, we provide specific convergence orders of truncated EM scheme \eqref{TEM1}.  
%
     If $r= \hat r = 1$ and    
$q> p = 13$, then $ \hat p =  ( p-r) \we  \frac{p}{1 + (r \ve \hat r)/2} = \frac{26}{3}$ 
and mean-square convergence order $\hat \g = 
\frac{1}{2} \we \frac{2}{3} \we ( \frac{13}{3} - \frac{1}{26/3 -2}  ) =\frac{1}{2}
$. 
Similarly, 
     if $r= \hat r = 2$ and    
$q> p = 26$, then $ \hat p =   13$ 
and mean-square order $\hat \g = 
\frac{1}{2} \we \frac{1}{3} \we ( \frac{13}{3} - \frac{1}{13 -2}  ) =\frac{1}{3}
$.  
\end{remark}
\begin{remark}\label{rem11}
In \cite[Theorm 3.3]{Li_Mao2024spa}, 
the authors established a pointwise error estimate for the truncated EM scheme applied to SFDE \eqref{Eq0}, showing that  at a fixed time $T$, 
\begin{align}\label{tmp202601}
\E |X(T)- Y(T)|^{\hat p} \le C(T) \D^{\hat p/2}, \qu \hat p \in \Big [ 2, \frac{q}{1+ 3r/2} \Big  ).
\end{align}
Theorem \ref{Th943}, in contrast, provides an error bound between the segment processes
$X_T$ and $\bar Y_T$, thereby measuring the approximation quality over 
a whole time interval $[T-\tau, T]$ rather than at a single instant.
Since pointwise evaluation is a special case of segment evaluation, Theorem \ref{Th943} naturally extends the result of \cite[Theorem 3.3]{Li_Mao2024spa} from a pointwise to a pathwise setting.
It can therefore be regarded as a generalization and refinement of the earlier pointwise convergence theorem.
%
%
\end{remark}
\section{A numerical example}
In this section,  a numerical example is presented to verify the strong convergence rate of  segment sequence for truncated EM.
\begin{example}\label{example4072}
Consider the functional version of  the stochastic volatility model  \eqref{example407}.  Applying the \Holder inequality 
and  the elementary equality
\begin{align}\label{eq4007}
& -2(a^2 + b^2 + ab) |a-b|^2 = -2(a^2 + b^2) |a-b|^2  -2 ab |a-b|^2 \no \\
& \le -2(a^2 + b^2) |a-b|^2+ (a^2 + b^2) |a-b|^2 = - |a-b|^2(a^2 + b^2), \qu \f a, b \in \R,  
\end{align}
we have 
\begin{align*}
&2\langle \psi(0) - \bar{\psi}(0), f(\psi) - f(\bar{\psi}) \rangle + 
(q-1) |g(\psi) - g(\bar{\psi})|^2 \\
&= 2 \lan \psi(0) - \bar{\psi}(0), a_1 (\psi(0) - \bar{\psi}(0))-a_2 (\psi^3(0) - \bar{\psi}^3(0))\ran
+ (q-1) \Big | \int_{-1}^0 [ \psi^2 (\th) - \bar \psi^2 (\th) ]d \th  \Big |^2 \\
& \le  2 a_1|\psi(0) - \bar{\psi}(0)|^2 - 2a_2 |\psi(0) - \bar{\psi}(0)|^2
 \Big ( \psi^2(0) + \psi(0) \bar \psi(0) + \bar \psi^2(0)  \Big ) \\
&\qu + (q-1)
 \int_{-1}^0  |\psi(\th) - \bar{\psi}(\th)|^2 |\psi(\th) + \bar{\psi}(\th)|^2  d \th \\
 &\le  2 a_1|\psi(0) - \bar{\psi}(0)|^2  -  a_2 |\psi(0) - \bar{\psi}(0)|^2
\Big ( \psi^2(0)  + \bar \psi^2(0)  \Big )  
 + 2(q-1)
 \int_{-1}^0  |\psi(\th) - \bar{\psi}(\th)|^2 |\psi(\th) + \bar{\psi}(\th)|^2  d \th .
\end{align*}
If $a_2 > 2(q-1)$ and $q=27$, then Assumptions  \ref{AS2_Kha} and \ref{AS5_POLYNO} hold 
with $\a_0=2a_1$, $\a_1= a_2$, $\a_2 = 2 (q-1)$, $ \hat r = r =2$. 

To demonstrate the strong convergence rate of the truncated EM method, the numerical solution obtained with $\D = 2^{-12}$ is taken as the reference. 
For SFDE \eqref{example407} with parameters $a_0=3$, $a_1=10$, $a_2=53$, 
Fig. \ref{fig1} shows the graph of $\log (\sqrt{\E \| X_T - Y_T\| ^2})$ versus $\log (\D)$ for five step sizes $\D = [2^{-7}, 2^{-8}, 2^{-9}, 2^{-10}, 2^{-11}]$ at $T=10$. Here, $X_T$ and $Y_T$ represent the exact and numerical segments at the terminal time $T$, respectively. The expectation $\E$ is approximated by taking the average over 1000 sample realizations.
It can be seen that the observed convergence order of $0.5234$ is higher than the theoretical order of $1/6$ given in Remark \ref{rem1}. This indicates that our theoretical results are somewhat conservative.  
Therefore, bridging the gap between these theoretical bounds and the high convergence orders observed in practice remains an important goal for future work.

\end{example}
\begin{figure}[!t]
  \centering
  \includegraphics[width=8cm,height=6cm]{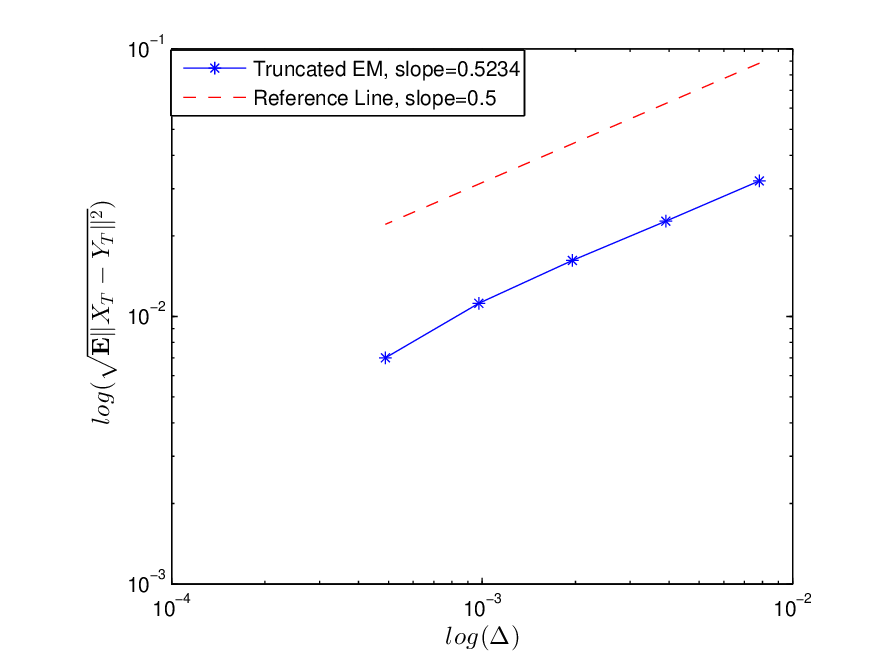}
\caption{Rate of strong convergence for Example \ref{example4072}  }
\label{fig1}
\end{figure}

\section*{Acknowledgments}
This work was supported  by the Natural Science Foundation of China (Nos. 12271003, 12501626, 62273003), the China Postdoctoral Science Foundation (No. 2023M733928), and
the
Natural Science Research Project of Anhui Educational Committee (No. 2023AH010011).
\bibliographystyle{model1-num-names}
\bibliography{refs_total202506}

\end{document}